\tikzset{main node/.style={circle,fill=blue!20,draw,inner sep=1pt},}
\newtheorem{theorem}{Theorem}[section]
\newtheorem{definition}[theorem]{Definition}
\newtheorem{assumption}[theorem]{Assumption}
\newtheorem{lemma}[theorem]{Lemma}
\newtheorem{remark}[theorem]{Remark}
\newtheorem{example}[theorem]{Example}
\newtheorem{corollary}[theorem]{Corollary}
\newtheorem{proposition}[theorem]{Proposition}  
\newcommand{\subqed}{$\blacksquare$}
\newcommand{\real}{{\mathbb{R}}}
\newcommand{\N}{{\mathbb{N}}}
\newcommand{\argmin}{\operatorname{argmin}}
\newcommand{\E}{\mathcal{E}}
\definecolor{BBlue}{cmyk}{.98,0.10,0,.25}
\newcommand{\G}{\mathcal{G}}
\newcommand{\V}{\mathbb{V}}
\newcommand{\T}{\mathcal{T}}
\title{Structural averaged controllability \\ of  linear ensemble systems}
\author{Bahman Gharesifard\footnote{Department of Mathematics and Statistics at Queen's University, Kingston, ON, Canada. Email: \texttt{bahman.gharesifard@queensu.ca}.} \quad\quad
Xudong Chen\footnote{ECEE Department, CU Boulder. Email: \texttt{xudong.chen@colorado.edu}.}}
\begin{document}

\date{}
\maketitle 

\begin{abstract}
In the paper, we introduce and address the problem of structural averaged controllability for linear ensemble systems. We provide examples highlighting the differences between this problem and others. In particular, we show that structural averaged controllability is strictly weaker than structural controllability for single (or ensembles of) linear systems. We establish a set of necessary or sufficient conditions for sparsity patterns to be structurally averaged controllable.
\end{abstract}

\section{Introduction}

Consider a linear ensemble control system over the parameterization space $\Sigma:=[0,1]$: 
\begin{equation}\label{eq:ensemble}
\dot{x}(t,\sigma):=\frac{\partial}{\partial t} x(t,\sigma)=A(\sigma) x(t,\sigma)+B(\sigma)u(t),
\end{equation}
where $\sigma \in \Sigma$, $ A\in \mathrm{C}^0(\Sigma,\real^{n\times n}) $, and $ B \in \mathrm{C}^0(\Sigma,\real^{n\times m}) $, with $n$ and $m$ positive integers, $ u(t) \in\real^m $ is the control input at time $ t\geq 0 $, and $ x(t,\sigma) \in \real^n $ is the state of the individual system indexed by $\sigma$ at time $t$. 

We review a few known controllability results associated with system~\eqref{eq:ensemble}. 
First, each individual system is a linear time-invariant system, and is controllable if and only if the columns of $\{A^k(\sigma) B(\sigma)\}^{n-1}_{k = 0}$ span $\real^n$.  

Next, we say that the ensemble system~\eqref{eq:ensemble} is $\mathrm{L}^p$-controllable, for $1\le p\le \infty$, if for any initial profile of the ensemble $ x(0,\cdot) \in \mathrm{C}^0(\Sigma,\real^n) $, any target profile $ x^* \in \mathrm{C}^0(\Sigma,\real^n) $, any error tolerance $\epsilon > 0$, and any time $T > 0$, there exists a control input $ u \in \mathrm{L}^1([0,T];\real^m) $ such that 
$\| x(T,\cdot) -x^*\|_{\mathrm{L}^p}< \epsilon 
$.  When $p = \infty$, $\mathrm{L}^\infty$-controllability is also commonly referred to as uniform controllability. 

It is known~\cite{RT:75} that the ensemble system~\eqref{eq:ensemble} is $\mathrm{L}^p$-controllable if the $ \mathrm{L}^p $-closure of the vector space spanned by the columns of $\{A^kB\}_{k=0}^{\infty}$ is $\mathrm{L}^p(\Sigma,\real^n)$ for $p < \infty$ and $\mathrm{C}^0(\Sigma,\real^n)$ for $p = \infty$. We further refer the reader to~\cite{li2015ensemble,helmke2014uniform,chen2020controllability} and references thereafter for other controllability results for linear ensemble systems. 

Finally, we recall the notion of {\em averaged controllability}~\cite{EZ:14}.
The ensemble system~\eqref{eq:ensemble} is said to be {\em averaged controllable} if for any initial state $ x_0\in \real^n$, with $ \int_0^1x(0,\sigma)=x_0 $, any target state $ x^*\in \real^n$, and any time $ T>0 $, there is a control input $ u \in \mathrm{L}^1([0,T];\real^m) $ such that 
$
\int_0^1x(T,\sigma)\mathrm{d}\sigma=x^*
$. 
The following result~\cite[Theorem~1]{EZ:14} presents a necessary and sufficient condition on averaged controllability:  

\begin{theorem}\label{theorem:averaged}
The ensemble control system~\eqref{eq:ensemble} is averaged controllable if and only if the vector space spanned by the columns of $ \{\int_0^1A^j(\sigma)B(\sigma)\mathrm{d}\sigma\}_{j\geq 0} $ is of rank $ n $. 
\end{theorem}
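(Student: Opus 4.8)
The plan is to reduce averaged controllability to a reachability question for a time-varying input operator, and then to characterize the reachable subspace through its annihilator. First I would write the solution of each individual system by the variation-of-constants formula,
\[
x(T,\sigma) = e^{A(\sigma)T}x(0,\sigma) + \int_0^T e^{A(\sigma)(T-s)}B(\sigma)u(s)\,\mathrm{d}s,
\]
and integrate over $\sigma \in [0,1]$. After interchanging the order of integration in the controlled term (justified by continuity of $A$, $B$ and integrability of $u$), the averaged final state splits as
\[
\int_0^1 x(T,\sigma)\,\mathrm{d}\sigma = \underbrace{\int_0^1 e^{A(\sigma)T}x(0,\sigma)\,\mathrm{d}\sigma}_{\text{fixed by the initial profile}} + \int_0^T M(T-s)u(s)\,\mathrm{d}s,
\]
where $M(t) := \int_0^1 e^{A(\sigma)t}B(\sigma)\,\mathrm{d}\sigma \in \real^{n\times m}$. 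Since the first term is determined once the initial profile is fixed, averaged controllability is equivalent to the assertion that the reachable subspace $\mathcal{R} := \{\int_0^T M(T-s)u(s)\,\mathrm{d}s : u \in \mathrm{L}^1([0,T];\real^m)\}$ equals all of $\real^n$; indeed, if $\mathcal{R}=\real^n$ any offset can be compensated, while if $\mathcal{R}$ is a proper subspace then for each fixed profile the reachable targets form a proper affine subspace and some $x^*$ is missed.

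Next I would compute $\mathcal{R}$ by duality. A vector $\xi \in \real^n$ is orthogonal to $\mathcal{R}$ if and only if $\xi^\top \int_0^T M(T-s)u(s)\,\mathrm{d}s = 0$ for every admissible $u$; by the fundamental lemma of the calculus of variations this holds exactly when $\xi^\top M(t) = 0$ for almost every, hence by continuity every, $t \in [0,T]$. Thus $\mathcal{R}^\perp = \{\xi : \xi^\top M(t) \equiv 0 \text{ on } [0,T]\}$, and the crux is to convert this pointwise-in-$t$ condition into the algebraic rank condition of the theorem.

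To do so I would exploit that $M$ is real-analytic in $t$. Because $A$ and $B$ are continuous on the compact set $[0,1]$ they are uniformly bounded, so the power series
\[
M(t) = \sum_{k=0}^{\infty} \frac{t^k}{k!}\int_0^1 A^k(\sigma)B(\sigma)\,\mathrm{d}\sigma
\]
converges for all $t$ (indeed it is entire) and may be differentiated term by term, giving $\left.\tfrac{\mathrm{d}^j}{\mathrm{d}t^j}M(t)\right|_{t=0} = \int_0^1 A^j(\sigma)B(\sigma)\,\mathrm{d}\sigma$. Analyticity then forces $\xi^\top M(t) \equiv 0$ on $[0,T]$ to be equivalent to $\xi^\top$ annihilating every derivative at $t=0$, that is, $\xi^\top \int_0^1 A^j(\sigma)B(\sigma)\,\mathrm{d}\sigma = 0$ for all $j \geq 0$. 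Hence $\mathcal{R}^\perp$ is the orthogonal complement of the span of the columns of $\{\int_0^1 A^j(\sigma)B(\sigma)\,\mathrm{d}\sigma\}_{j\geq 0}$, so $\mathcal{R}$ coincides with that span, and $\mathcal{R} = \real^n$ holds if and only if the span has rank $n$.

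The main obstacle I anticipate is the passage in the third paragraph from the analytic identity $\xi^\top M(t)\equiv 0$ to the countable family of algebraic constraints: this requires justifying term-by-term differentiation of the matrix exponential under the integral and invoking the identity theorem for real-analytic functions, and it is precisely here that the continuity, hence boundedness, of $A$ and $B$ is essential. By contrast, the reduction in the first paragraph and the duality in the second are routine once the interchange of integrals and the fundamental lemma are in hand.
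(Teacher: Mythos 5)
Your proposal is correct, but note that the paper itself gives no proof of this statement: it is quoted verbatim from~\cite{EZ:14}, so there is nothing internal to compare against. Your argument (variation of constants, reduction to the reachable subspace of the averaged kernel $M(t)=\int_0^1 e^{A(\sigma)t}B(\sigma)\,\mathrm{d}\sigma$, duality, and the identity theorem for the entire function $t\mapsto \xi^\top M(t)$ to pass to the Taylor coefficients $\int_0^1 A^j(\sigma)B(\sigma)\,\mathrm{d}\sigma$) is precisely the standard proof in that reference, and all the delicate steps --- interchange of the $\sigma$- and $s$-integrals, term-by-term differentiation justified by the uniform bound on $A$, $B$ over the compact parameter set, and the observation that the span is finite-dimensional hence closed --- are handled correctly.
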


In this paper, we will consider sparse matrix pairs $(A, B)$ and introduce a novel structural controllability problem, namely, the {\em structural averaged controllability problem}.

The sparsity pattern of $(A, B)$,  i.e., the locations of non-zero entries, can be represented by a directed graph (digraph) $\G = (V, \E)$  on $(n + m)$ nodes, with the node set given by $V =\{\alpha_1,\ldots \alpha_n,\beta_1,\ldots, \beta_m\}$. The $ \alpha $-nodes correspond to the states and the $\beta$-nodes correspond to the control inputs. We denote by $V_\alpha:= \{\alpha_1,\ldots, \alpha_n\}$ and $V_\beta := \{\beta_1,\ldots, \beta_n\}$.   
The edge set $ \E $ is defined as follows:
\begin{enumerate}
\item There is no incoming edge to any $ \beta $-node;
\item If the $ji$th entry of $A$ is not  a zero function, then $(\alpha_i,\alpha_j) \in \E $; 
\item Similarly, if the $ji$th entry of $B$ is not a zero function, then $(\beta_i,\alpha_j) \in \E$.  
\end{enumerate}
Conversely, to any such digraph $\G$, one can assign a class of matrix pairs $(A', B')$ of appropriate size (i.e., $A$ is $n\times n$ and $B$ is $n\times m$) such that their sparsity patterns $\G'$ are subgraphs of $\G$. All of these pairs $(A',B')$ form a vector space, which we denote by $\V(\G)$. Any pair $(A',B')$ in the space $\V(\G)$ is said to be {\em compliant with} $\G$.  

In the remainder of the section, we first recall the classical structural controllability problem introduced by Lin~\cite{CTL:74} and the structural {\em ensemble} controllability problem introduced in~\cite{XC:20}. After that, we introduce the problem of structural averaged controllability that will be investigated in this paper.

\subsection{Structural controllability}
Consider finite-dimensional linear time-invariant systems 
\begin{equation}\label{eq:singlelinearsystem}
\dot x(t) = A x(t) + Bu(t).
 \end{equation}
 Here, pairs $(A, B)$ are simply constant matrices (instead of being matrix-valued functions).  
Correspondingly, for a given sparsity pattern $\G$, the vector space $\V(\G)$ is now a subspace of $\real^{n\times n} \times \real^{n\times m}$. 
A sparsity pattern $\G$ is said to be structural controllable if there exists a pair $(A, B)\in \V(\G)$ such that the resulting linear system~\eqref{eq:singlelinearsystem} is controllable. 
Necessary and sufficient conditions for $\G$ to be structurally controllable have been derived and presented in various forms~\cite{CTL:74,shields1976structural,glover1976characterization,olshevsky2015minimum}. We provide below a graphical condition. %

We say that the digraph $\G = (V, \E)$ is {\em accessible} to the $\beta$-nodes if for each $\alpha$-node $\alpha_i$, there exist a $\beta$-node $\beta_j$ and a path from $\beta_j$ to $\alpha_i$. 
For a given subset $V'$ of $V$, we let $N_{\rm in}(V')$ be the set of in-neighbors of $V'$, 
$$
N_{\rm in}(V'):= \{ v_i\in V \mid \exists v_j\in V' \mbox{ s.t. } (v_i,v_j)\in \E \}. 
$$
We gather the following result~\cite[Theorem 1]{olshevsky2015minimum}: 

\begin{theorem}\label{thm:sin}
A sparsity pattern is structurally controllable if and only if (1) $\G$ is accessible to $\beta$-nodes, and (2) for any subset $V'\subseteq V_\alpha$, $|N_{\rm in}(V')| \ge |V'|$.   	
\end{theorem}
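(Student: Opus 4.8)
The plan is to prove the two directions separately, and to reduce the sufficiency direction to a genericity statement. The key preliminary observation is that controllability of a pair $(A,B)$ is equivalent, by the Kalman criterion, to the controllability matrix $K(A,B) := [B, AB, \ldots, A^{n-1}B]$ having rank $n$; since the $n\times n$ minors of $K(A,B)$ are polynomials in the free entries of $(A,B) \in \V(\G)$, the set of controllable compliant pairs is either empty or a dense (Zariski-open) subset of $\V(\G)$. Hence $\G$ is structurally controllable if and only if the generic rank of $K$ over $\V(\G)$ equals $n$, and to prove sufficiency it suffices to exhibit a single controllable realization, equivalently one $n\times n$ minor that is a nonzero polynomial in the parameters.

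For necessity, I would show that each of (1) and (2) is forced. Suppose (1) fails, so the set $U \subseteq V_\alpha$ of $\alpha$-nodes with no directed path from any $\beta$-node is nonempty. By definition of reachability, every edge into $U$ must originate inside $U$: an in-neighbor of a node of $U$ that were either reachable or a $\beta$-node would render that node reachable, a contradiction. Translating edges into nonzero entries, this means that for every compliant $(A,B)$ the rows indexed by $U$ have no nonzero entries in the $B$-block and no nonzero $A$-entries in columns outside $U$; after reordering, $A$ is block upper-triangular with a diagonal block $A_{UU}$ and $B$ has zero rows on $U$. Any left eigenvector of $A_{UU}$, lifted by zeros, is then a left eigenvector $w$ of $A$ with $w^\top B = 0$, so the Popov--Belevitch--Hautus test fails and no compliant pair is controllable. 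Now suppose (2) fails, so there is $V' \subseteq V_\alpha$ with $|N_{\rm in}(V')| < |V'|$. The rows of $[A\ B]$ indexed by $V'$ are supported, for every compliant pair, on the $|N_{\rm in}(V')|$ columns indexed by $N_{\rm in}(V')$; thus these $|V'|$ rows span a space of dimension at most $|N_{\rm in}(V')| < |V'|$ and are linearly dependent, forcing $\mathrm{rank}[A\ B] < n$. Since the reachable subspace $\mathcal{R}$ satisfies $\mathcal{R} = \mathrm{Im}\,B + A\,\mathcal{R} \subseteq \mathrm{Im}[A\ B]$, controllability requires $\mathrm{rank}[A\ B] = n$, so again no compliant pair is controllable.

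For sufficiency, assume (1) and (2). First I would read condition (2) through Hall's marriage theorem: regarding $[A\ B]$ as a bipartite incidence structure between rows $V_\alpha$ and columns $V$, the inequality $|N_{\rm in}(V')| \ge |V'|$ for all $V'$ is exactly Hall's condition, so there is a matching saturating $V_\alpha$ that assigns to each $\alpha_j$ a distinct in-neighbor. The matched edges lying inside $V_\alpha$ form, since each node receives exactly one matched edge, a disjoint union of directed paths and cycles, the path-heads being the $\alpha$-nodes matched to $\beta$-nodes. Next I would invoke accessibility (1) to append to every matched cycle a directed path from some $\beta$-node, producing a spanning subgraph that is a disjoint union of input-rooted stems with attached cycles---a Lin cactus covering all of $V_\alpha$. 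Finally, assigning generic nonzero values to the cactus edges and zero to all other admissible entries, I would verify that $K(A,B)$ has rank $n$; the stem/cycle structure lets one exhibit an $n\times n$ minor whose expansion contains a monomial that cannot cancel, for instance by arranging the stems so that the associated block of $A$ is triangular and giving the cycles distinct characteristic factors.

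The hard part will be this last step: showing that the explicit cactus realization is genuinely controllable, i.e.\ that the chosen $n\times n$ minor of $K$ is a nonzero polynomial rather than one that vanishes identically through cancellation. The clean route is to choose the numerical assignment so that no cancellation is possible---e.g.\ route each stem so that the reachable vectors $B, AB, \ldots$ fill successive coordinates in a staircase pattern, and give the cycles pairwise distinct gains so their contributions appear as independent monomials. An alternative that avoids explicit minors is a PBH genericity argument: conditions (1) and (2) guarantee that for each fixed $\lambda$ the generic rank of $[\lambda I - A\ B]$ is $n$, and since the exceptional $\lambda$ are confined to the finitely many eigenvalues of $A$, a single generic choice of parameters can be made to work for all $\lambda$ simultaneously; managing this uniformity in $\lambda$, via a resultant or a dimension count on the incidence variety, is the corresponding technical crux.
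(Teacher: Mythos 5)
First, a point of comparison: the paper does not prove this theorem at all --- it is imported verbatim from the literature (\cite[Theorem 1]{olshevsky2015minimum}, going back to Lin), so your proposal can only be measured against the standard classical proofs. Against that baseline, your necessity direction is complete and correct: the argument for condition (1) (the unreachable set $U$ forces the block-triangular ``Form I'' shape, and a left eigenvector of $A_{UU}$ padded with zeros kills the PBH test for every compliant pair) and the argument for condition (2) (a deficient set $V'$ forces the $|V'|$ rows of $[A\ B]$ into a column space of dimension $|N_{\rm in}(V')|<|V'|$, so $\mathrm{rank}\,[A\ B]<n$, while controllability needs $\mathrm{rank}\,[A\ B]=n$ since the reachable subspace lies in $\mathrm{Im}\,B+\mathrm{Im}\,A$) are exactly the textbook arguments, correctly executed. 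The genericity preliminary (controllable compliant pairs form a Zariski-open set, so one witness suffices) is also right.

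The genuine gap is in sufficiency, in two places. First, the cactus assembly step is glossed: after Hall's theorem gives you the path/cycle cover, you ``append to every matched cycle a directed path from some $\beta$-node,'' but such a path can pass through nodes already covered by other stems or cycles, so it does not directly yield a disjoint union of input-rooted stems with buds; the classical treatments handle this either by Lin's induction on Forms I/II or by attaching each cycle through a single edge from an already-accessible node, and that bookkeeping is not automatic. Second, and more fundamentally, the verification that the resulting cactus realization is controllable --- i.e.\ that some $n\times n$ minor of $K(A,B)$ is a not-identically-zero polynomial --- is the actual mathematical content of the hard direction, and you only name two candidate strategies (a staircase/no-cancellation assignment, or a PBH genericity argument) without executing either. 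The PBH fallback has precisely the defect you flag yourself: pointwise generic fullness of $\mathrm{rank}\,[\lambda I-A\ \ B]$ for each fixed $\lambda$ does not yield one parameter choice valid at all eigenvalues of $A$ simultaneously, because the exceptional eigenvalues move with the parameters; closing that loop (via a resultant, a dimension count, or a direct inductive construction) is where the proof actually lives. As written, your proposal is a correct and well-organized roadmap of the classical proof --- necessity fully proved, sufficiency reduced to a known but unproved lemma --- rather than a complete proof.
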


\subsection{Structural ensemble controllability}

We now return to the ensemble setting~\eqref{eq:ensemble}, where $A$ and $B$ are matrix-valued functions. For a  sparsity pattern $\G$, the vector space $\V(\G)$ is a subspace of $\mathrm{C}^0(\Sigma,\real^{n\times n} \times \real^{n\times m})$.  

Following~\cite{XC:20}, we say that a sparsity pattern $\G$ is {\em structurally ensemble controllable} if there exists a pair $(A, B)\in \V(\G)$ such that the ensemble system~\eqref{eq:ensemble} is $\mathrm{L}^p$-controllable for some (and, hence, any, see~\cite{XC:20})  $p = 1,\ldots, \infty$.

We now recall the main result of~\cite{XC:20}.
To state this result, recall that $ \G=(V,\E) $  is said to admit a Hamiltonian decomposition if it contains a subgraph $ \G'=(V,\E') $, where $ \E'\subseteq \E $ such that $ \G' $ is a disjoint union of cycles.

\begin{theorem}\label{thm:XC} A sparsity pattern $\G$ is structurally ensemble controllable if and only if (1) $\G$ is accessible to the $\beta$-nodes, and (2) the subgraph of $\G$ induced by $V_\alpha$ admits a Hamiltonian decomposition.
\end{theorem}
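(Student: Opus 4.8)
The plan is to establish both implications by reducing the combinatorial conditions to the analytic density criterion recalled from~\cite{RT:75}: the ensemble system is $\mathrm{L}^p$-controllable precisely when the $\mathrm{L}^p$-closure of $\mathrm{span}_\real\{A^kB\}_{k\ge 0}$ fills the ambient function space. Since, as noted from~\cite{XC:20}, $\mathrm{L}^p$-controllability for one $p$ is equivalent to that for any $p$, I would work with $\mathrm{L}^\infty$ throughout and treat the two directions separately.

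For sufficiency, assume $\G$ is accessible to the $\beta$-nodes and that $\G$ restricted to $V_\alpha$ admits a Hamiltonian decomposition; I would exhibit a single compliant pair $(A,B)$ that is $\mathrm{L}^\infty$-controllable. A Hamiltonian decomposition is a permutation $\pi$ with $(\alpha_i,\alpha_{\pi(i)})\in\E$ for every $i$, so placing nonzero weights on exactly these edges makes the generalized diagonal $\{A_{\pi(i),i}\}$ structurally free and forces $\det A\not\equiv 0$. I would use this freedom (together with any further edges of $\G$ available along the cycles) to construct $A(\sigma)$ with $n$ distinct, point-separating, $\sigma$-varying eigenvalues $\lambda_1(\sigma),\dots,\lambda_n(\sigma)$, e.g. with $\sigma\mapsto\lambda_1(\sigma)$ injective. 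Writing $A^k=\sum_j\lambda_j^kP_j$ through the spectral projections $P_j(\sigma)$, the span of $\{A^kB\}$ contains every combination $\sum_j q(\lambda_j(\sigma))\,P_j(\sigma)B(\sigma)$ with $q$ a polynomial. Using accessibility to choose $B(\sigma)$ so that each mode is excited (each $P_j(\sigma)B(\sigma)\neq 0$, with the column data spanning $\real^n$ pointwise), a Stone--Weierstrass argument in the separating variable $\lambda_1(\sigma)$ would yield density, hence $\mathrm{L}^\infty$-controllability via~\cite{RT:75}.

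For necessity, suppose a pair $(A,B)$ compliant with $\G$ is $\mathrm{L}^p$-controllable. If $\G$ were not accessible to the $\beta$-nodes, then for every compliant pair the coordinate subspace $W\subset\real^n$ spanned by the states reachable from some $\beta$-node would be a proper, $A$-invariant subspace containing the range of $B$; hence each $A^kB$ would take values in $W$, confining $\mathrm{span}\{A^kB\}$ to the proper closed subspace $\mathrm{C}^0(\Sigma,W)$ and contradicting controllability. This gives condition (1). For condition (2) I would argue contrapositively: if $\G$ restricted to $V_\alpha$ admits no Hamiltonian decomposition, then no surviving term remains in the Leibniz expansion of $\det A$, so $\det A(\sigma)\equiv 0$ for every compliant $A$ and every $\sigma$, and by the Frobenius--König theorem the sparsity pattern contains a structural zero block $A_{R,C}=0$ with $|R|+|C|\ge n+1$. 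I would then convert this block into an analytic obstruction, showing that it forces a fixed, $\sigma$-independent linear relation among the components of every element of $\mathrm{span}\{A^kB\}$, thereby confining the reachable set to a proper closed subspace of $\mathrm{C}^0(\Sigma,\real^n)$ and precluding controllability.

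I expect the necessity of the Hamiltonian decomposition to be the main obstacle. The passage from a purely combinatorial deficiency (the Frobenius--König zero block) to a genuine analytic obstruction in an infinite-dimensional function space is delicate: unlike the single-system case, where a rank drop of the controllability matrix is immediate, here one must show that the deficiency survives the $\mathrm{L}^p$-closure and holds uniformly over \emph{all} compliant $(A,B)$ rather than merely generically. Careful bookkeeping of how the structural zero block propagates through the iterates $A^kB$---verifying that it yields a genuine invariant functional instead of being washed out by the parameter dependence---will be the crux, and I anticipate needing an induction on the filtration generated by the subspaces $A^k\,\mathrm{span}\{e_j:j\in C\}$ together with the pointwise controllability of the individual systems.
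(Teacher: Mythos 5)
The paper does not actually prove Theorem~\ref{thm:XC}: it is imported verbatim from~\cite{XC:20}, so there is no internal proof to compare against; your proposal must therefore be judged against the known argument, and it has a genuine gap in the necessity direction.

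Your mechanism for the necessity of condition (2) is false, and the paper itself contains the counterexample. From the Frobenius--K\"onig zero block you propose to extract ``a fixed, $\sigma$-independent linear relation among the components of every element of $\mathrm{span}\{A^kB\}$,'' i.e., confinement to $\mathrm{C}^0(\Sigma,W)$ for a proper subspace $W\subsetneq\real^n$. Consider the sparsity pattern in the proof of Proposition~\ref{prop:nontrivialexample}: its $\alpha$-subgraph admits no Hamiltonian decomposition (nodes $\alpha_2,\alpha_3$ have no outgoing edges), so $\det A\equiv 0$ for every compliant pair and your zero block is present; yet with $A(\sigma)$ having first column $(\sigma,\sigma^2,\sigma^3)^\top$ and $B=e_1$, the columns $A^kB=(\sigma^k,\sigma^{k+1},\sigma^{k+2})^\top$ for $k\ge 1$ admit \emph{no} fixed relation $c^\top A^kB\equiv 0$ with $c\neq 0$, since that would force $c_1+c_2\sigma+c_3\sigma^2\equiv 0$ on $[0,1]$. (Indeed the paper shows this pattern is averaged controllable precisely because the integrated columns span $\real^3$.) The obstruction that survives the closure is $\sigma$-\emph{dependent} --- here the pointwise relation $a_{31}(\sigma)x_2(\sigma)=a_{21}(\sigma)x_3(\sigma)$ --- and such relations do not place the reachable set inside any fixed coordinate-type subspace, so your filtration plan has nothing to latch onto. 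The correct route is spectral: no Hamiltonian decomposition forces $\det A(\sigma)=0$ for every $\sigma$ and every compliant pair, hence $0\in\mathrm{spec}(A(\sigma))$ uniformly in $\sigma$; ensemble $\mathrm{L}^p$-controllability implies controllability of every finite subensemble with distinct parameters, and for $N>m$ distinct values $\sigma_1,\dots,\sigma_N$ the PBH test at $\lambda=0$ fails for $\mathrm{diag}(A(\sigma_1),\dots,A(\sigma_N))$ with the stacked input matrix, since the rank defect at $0$ is at least $N$ while only $m$ input columns are available. This shared-eigenvalue argument is uniform over all compliant pairs, which is exactly the property your approach struggles to secure.

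Your sufficiency sketch follows a reasonable spectral-separation-plus-density outline, but it understates the hard step. An $A$ supported on a Hamiltonian decomposition is a weighted permutation, and a cycle of length $\ell\ge 3$ contributes eigenvalues of the form $\mu(\sigma)\omega^j$ with $\omega$ an $\ell$-th root of unity: the spectra are genuinely complex, the branches are rigidly rotated copies of one another, and the Stone--Weierstrass step becomes polynomial (no conjugates) approximation on curves in $\mathbb{C}$, which is delicate and is where the actual construction in~\cite{XC:20} does its work. Moreover, accessibility may force nonzero entries on edges outside the decomposition, which perturbs the clean spectral picture you rely on; asserting that one can simultaneously maintain compliance, continuity, point-separating spectra, and excitation of all modes is precisely the content that needs proof rather than a remark.
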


\begin{remark}\label{rmk:ensemblestronger}
Note that condition (2) in Theorem~\ref{thm:XC} is stronger than condition (2) in Theorem~\ref{thm:sin} (details can be found in~\cite{XC:20}). Thus, if $\G$ is structurally ensemble controllable, then it is structurally controllable, but not vice versa.   
\end{remark}

\subsection{Structural averaged controllability}

We now arrive at the central part of what we will study in this paper, namely, structural averaged controllability. 
We still consider the ensemble setting~\eqref{eq:ensemble} so that $A$ and $B$ are matrix-valued functions, and $\V(\G)$ is a subspace of $\mathrm{C}^0(\Sigma,\real^{n\times n} \times \real^{n\times m})$ as was introduced earlier. 


\begin{definition}\label{def:average-structural-controllable}
A sparsity pattern $\G$ is {\bf structurally averaged controllable} if there exists a pair $ (A,B) \in \V(\G) $ such that the resulting system~\eqref{eq:ensemble} is averaged controllable
\end{definition}

 One can relax the continuity of $(A,B)$ and require that $A$ and $B$ are measurable and uniformly bounded (as is assumed in~\cite{EZ:14}); our main results to follow still apply in this case.
In the sequel, we compare the notion of structural averaged controllability with the other two; we show that it is the weakest among all the three notions. Moreover, we present a sparsity pattern that is structurally averaged controllable, but not structurally (ensemble) controllable. This is done in Section~\ref{subsection:comp}. Then, in Sections~\ref{subsection:necessary} and~\ref{subsection:sufficient}, respectively, we present a few novel necessary or sufficient conditions for sparsity patterns to be structurally averaged controllable. The Appendix includes a novel problem on variations of Hilbert matrices, which may be of independent interest.

\section{Main Results and Proofs}\label{section:main}
We now present and establish the main results of the paper. 

\subsection{Comparison between different structural controllability}\label{subsection:comp}

We start by establishing the following result:

\begin{proposition}\label{prop:weakest}
	If $\G$ is structurally (ensemble) controllable, then $\G$ is also structurally averaged controllable. 
\end{proposition}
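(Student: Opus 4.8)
The plan is to reduce the claim to the single-system case and then exploit the fact that constant matrix-valued functions are admissible ensemble data. First, by Remark~\ref{rmk:ensemblestronger}, structural ensemble controllability already implies structural controllability, so it suffices to establish the implication under the weaker hypothesis that $\G$ is merely structurally controllable; the ensemble case then follows for free.

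Assume, then, that $\G$ is structurally controllable. By definition there is a constant pair $(A,B)\in\V(\G)$, viewed as real matrices in $\real^{n\times n}\times\real^{n\times m}$, rendering the single system~\eqref{eq:singlelinearsystem} controllable; equivalently, the columns of $\{A^kB\}_{k=0}^{n-1}$ span $\real^n$. The key observation is that this very pair, regarded as a $\sigma$-independent (hence continuous) matrix-valued function, also lies in $\V(\G)$ for the ensemble setting~\eqref{eq:ensemble}, since constant functions are continuous and carry the same sparsity pattern. I would thus use it as the witness pair in Definition~\ref{def:average-structural-controllable}.

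Finally I would invoke the averaged controllability criterion of Theorem~\ref{theorem:averaged}. Because $A$ and $B$ do not depend on $\sigma$, we have $\int_0^1 A^j(\sigma)B(\sigma)\,\mathrm{d}\sigma = A^jB$ for every $j\geq 0$, so the span of the columns of $\{\int_0^1 A^j(\sigma)B(\sigma)\,\mathrm{d}\sigma\}_{j\geq 0}$ coincides with that of $\{A^jB\}_{j\geq 0}$, which by the Cayley--Hamilton theorem equals the span of $\{A^kB\}_{k=0}^{n-1}$. The latter is all of $\real^n$ by controllability of $(A,B)$, so the rank condition of Theorem~\ref{theorem:averaged} holds and the ensemble system is averaged controllable; since the witness lies in $\V(\G)$, the pattern $\G$ is structurally averaged controllable. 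There is no substantive obstacle in this argument, which is a direct reduction; the only point deserving care is the embedding of the constant witness pair into the function space $\V(\G)$ and the verification that, for constant data, the averaged Kalman-type matrix collapses to the ordinary controllability matrix, after which Theorem~\ref{theorem:averaged} and Cayley--Hamilton conclude the proof.
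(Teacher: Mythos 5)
Your proof is correct and follows essentially the same route as the paper: both take a constant controllable pair $(A',B')\in\V(\G)$ guaranteed by structural controllability, view it as a $\sigma$-independent element of $\V(\G)$ in the ensemble setting, and note that $\int_0^1 A^k(\sigma)B(\sigma)\,\mathrm{d}\sigma = A'^kB'$ so that Theorem~\ref{theorem:averaged} applies. Your explicit appeals to Remark~\ref{rmk:ensemblestronger} and Cayley--Hamilton merely make precise steps the paper leaves implicit.
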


\begin{proof}
We exhibit continuous functions $A:\Sigma \to \real^{n\times n}$ and $B: \Sigma\to \real^{n\times m}$ such that the resulting ensemble system~\eqref{eq:ensemble} is averaged controllable. 
Since $\G$ is structurally controllable,	 we let $(A', B')\in \real^{n\times n} \times \real^{n\times m}$ be compliant with $\G$ such that the single linear control system~\eqref{eq:singlelinearsystem} is controllable. 
Thus, the columns of $\{A'^kB'\}^{n-1}_{k = 0}$ span $\real^n$. 
Now, let $A(\sigma):= A'$ and $B(\sigma):= B'$ for all $\sigma\in \Sigma$, i.e., $A$ and $B$ are constant functions.  
Then, 
$$
\int_0^1 A^k(\sigma) B(\sigma) \mathrm{d} \sigma = A'^k B', 
$$  
for any $k \ge 0$. It then follows that the span of the columns of $\{\int_0^1 A^k(\sigma) B(\sigma) \mathrm{d} \sigma \}_{k\ge 0}$ is the span of the columns of $\{A'^k B'\}_{k \ge 0}$, which is $\real^n$. 
\end{proof}

We next show that the converse of Proposition~\ref{prop:weakest} is not true, i.e., $\G$ being structurally averaged controllable does not imply that $\G$ is structural (ensemble) controllable.  

\begin{proposition}\label{prop:nontrivialexample}
There exists a sparsity pattern that is structurally averaged controllable, yet not structurally (ensemble) controllable. 
\end{proposition}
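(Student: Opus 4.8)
The plan is to exhibit an explicit small sparsity pattern $\G$ together with a compliant matrix-valued pair $(A(\sigma), B(\sigma))$, verify averaged controllability via the rank condition of Theorem~\ref{theorem:averaged}, and then separately verify that $\G$ fails the combinatorial criterion of Theorem~\ref{thm:XC} (and, via Remark~\ref{rmk:ensemblestronger}, it suffices to defeat the weaker structural controllability condition of Theorem~\ref{thm:sin}). Since the gap between averaged and ensemble controllability should already appear for small $n$ with a single control input $m=1$, I would search among patterns on $n=2$ states. The natural candidate is the pattern where $\beta_1$ feeds only one state, say $\alpha_1$, and $A$ has support that fails to make the state subgraph admit a Hamiltonian decomposition (no disjoint-cycle cover of $V_\alpha$), yet still permits the averaged moments $\int_0^1 A^k(\sigma)B(\sigma)\,\mathrm{d}\sigma$ to span $\real^2$.

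Concretely, I would take $\G$ on $V_\alpha = \{\alpha_1,\alpha_2\}$ and $V_\beta=\{\beta_1\}$ with edges $(\beta_1,\alpha_1)$, $(\alpha_1,\alpha_2)$, and a self-loop $(\alpha_1,\alpha_1)$, but crucially \emph{no} edge into $\alpha_1$ from $\alpha_2$ and no self-loop at $\alpha_2$. The induced subgraph on $V_\alpha$ then has a cycle at $\alpha_1$ but no cycle covering $\alpha_2$, so it cannot be written as a disjoint union of cycles spanning $V_\alpha$, i.e.\ no Hamiltonian decomposition exists, and condition (2) of Theorem~\ref{thm:XC} fails; hence $\G$ is not structurally ensemble controllable. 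I would also check whether it fails structural controllability outright via Theorem~\ref{thm:sin}: if so, the example is even sharper, separating averaged from single-system structural controllability, which strengthens the abstract's claim. The compliant matrices would be $A(\sigma) = \begin{pmatrix} a_{11}(\sigma) & 0 \\ a_{21}(\sigma) & 0\end{pmatrix}$ and $B(\sigma) = \begin{pmatrix} b_1(\sigma) \\ 0 \end{pmatrix}$, with continuous scalar functions to be chosen.

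The core computation is then to choose $a_{11},a_{21},b_1$ so that the averaged moment matrix has rank $2$. The first moment $\int_0^1 B\,\mathrm{d}\sigma$ is $(\int b_1, 0)^\top$, which only reaches the $\alpha_1$-direction; the second moment $\int_0^1 A B\,\mathrm{d}\sigma = (\int a_{11}b_1,\ \int a_{21}b_1)^\top$ can supply a nonzero $\alpha_2$-component precisely when $\int_0^1 a_{21}(\sigma)b_1(\sigma)\,\mathrm{d}\sigma \neq 0$. So I would pick, for instance, $b_1(\sigma)\equiv 1$, $a_{21}(\sigma)\equiv 1$, and $a_{11}$ arbitrary (even zero), giving moment vectors $(1,0)^\top$ and $(\int a_{11},\,1)^\top$, which are linearly independent and span $\real^2$. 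By Theorem~\ref{theorem:averaged} the system is averaged controllable, so $\G$ is structurally averaged controllable.

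The main obstacle, and the step deserving the most care, is the \emph{negative} direction: confirming that the chosen $\G$ genuinely fails structural ensemble controllability for \emph{every} compliant pair, not merely for my chosen one. This is exactly what Theorem~\ref{thm:XC} buys us, since its criterion is purely combinatorial in $\G$ and independent of the particular functions; I must therefore verify carefully that the induced subgraph on $V_\alpha$ admits no Hamiltonian decomposition, which reduces to checking that $\alpha_2$ lies on no cycle within $V_\alpha$ (it has out-degree zero inside $V_\alpha$). A secondary subtlety is ensuring the pattern is still accessible to $\beta$-nodes (so that failure is attributable to condition (2), making the contrast with averaged controllability clean), and deciding whether the example should be tuned to also violate Theorem~\ref{thm:sin} so as to separate averaged controllability from \emph{both} stronger notions simultaneously.
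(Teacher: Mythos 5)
There is a genuine gap, and it sits exactly at the step you deferred. Your pattern on two states with edges $(\beta_1,\alpha_1)$, $(\alpha_1,\alpha_1)$, $(\alpha_1,\alpha_2)$ does fail condition (2) of Theorem~\ref{thm:XC} (node $\alpha_2$ has out-degree zero inside $V_\alpha$, so no Hamiltonian decomposition exists), and your moment computation correctly shows averaged controllability. But when you ``also check whether it fails structural controllability outright via Theorem~\ref{thm:sin}: if so\dots'', the answer is \emph{no}: your graph is accessible, and every $V'\subseteq V_\alpha$ satisfies $|N_{\rm in}(V')|\ge |V'|$ (take $V'=\{\alpha_1\}$, $\{\alpha_2\}$, $\{\alpha_1,\alpha_2\}$ with in-neighbor sets $\{\beta_1,\alpha_1\}$, $\{\alpha_1\}$, $\{\beta_1,\alpha_1\}$ respectively). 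Indeed the constant compliant pair $A'=\left(\begin{smallmatrix}0&0\\1&0\end{smallmatrix}\right)$, $B'=\left(\begin{smallmatrix}1\\0\end{smallmatrix}\right)$ is controllable. So your example \emph{is} structurally controllable, and its averaged controllability already follows from Proposition~\ref{prop:weakest} with constant matrices --- no ensemble-specific construction is needed. What you have separated is averaged controllability from structural \emph{ensemble} controllability only, which is strictly weaker than the proposition as the paper intends it: the statement is explicitly set up to refute the converse of Proposition~\ref{prop:weakest}, whose hypothesis is single-system structural controllability, and the paper's proof verifies failure of \emph{both} notions (Theorem~\ref{thm:sin} plus Remark~\ref{rmk:ensemblestronger}).

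The deeper issue is that your search space $n=2$, $m=1$ is provably too small for the intended separation. For $n=2$, any pattern violating Theorem~\ref{thm:sin} either is inaccessible (hence not structurally averaged controllable by Lemma~\ref{lem:casek0}), or has some $\alpha$-node with no in-edges at all (making the corresponding row of every $\int_0^1 A^k(\sigma)B(\sigma)\,\mathrm{d}\sigma$ zero), or has $B\equiv 0$; in all cases averaged controllability fails too. The paper escapes this with $n=3$: the star-with-self-loop pattern $\E=\{(\alpha_1,\alpha_i),(\beta_1,\alpha_1)\mid i=1,2,3\}$ gives $N_{\rm in}(V_\alpha)=\{\alpha_1,\beta_1\}$ of size $2<3$, defeating Theorem~\ref{thm:sin}, while the genuinely parameter-dependent choice $a_{i1}(\sigma)=\sigma^i$, $b_1\equiv 1$ produces the full-rank Hilbert-type moment matrix with rows $(1,1/2,1/3)$, $(0,1/3,1/4)$, $(0,1/4,1/5)$. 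Note the role of $\sigma$-dependence there: since that pattern is \emph{not} structurally controllable, no constant pair can work, and the averaged rank must be manufactured from distinct moments $\int_0^1\sigma^{i+j-2}\,\mathrm{d}\sigma$ --- this is the mechanism your two-node example never has to exercise. Your proof, as written, establishes only the weaker statement and would need to be rebuilt around a pattern of this three-node type.
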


\begin{proof}
Consider a sparsity pattern $\G = (V, \E)$ on three $\alpha$-nodes $\alpha_1$, $\alpha_2$, $\alpha_3$, and one $\beta$-node $\beta_1$. 
The edge set $\E$ is given by 
$
\E = \{(\alpha_1, \alpha_i), (\beta_1,\alpha_1) \mid i = 1,2,3\}
$. 
Thus, any pair $(A, B)$ compliant with $\G$ takes the following form: 
\begin{align*}
A=\begin{bmatrix}
\star & 0 & 0 \\
\star & 0 & 0\\
\star & 0 & 0\\
\end{bmatrix}
\quad \mathrm{and} \quad 
B=\begin{bmatrix}
\star\\
0  \\
0
\end{bmatrix}.
\end{align*}
Note that the in-neighbor of $V_\alpha$ is given by
$
N_{\rm in}(V_\alpha) = \{\alpha_1, \beta_1\}
$, 
so $|N_{\rm in}(V_\alpha)| < |V_\alpha|$. 
Thus, $\G$ is not structurally controllable by Theorem~\ref{thm:sin} nor structural ensemble controllable by Remark~\ref{rmk:ensemblestronger}.   

We now show that $\G$ is structurally averaged controllable. Choose the following pair $(A, B)$ in $\V(\G)$:  
\begin{align*}
A(\sigma):=
\begin{bmatrix}
\sigma & 0 & 0 \\
\sigma^2 & 0 & 0\\
\sigma^3 & 0 & 0\\
\end{bmatrix}
\quad \mbox{and} \quad 
B(\sigma):=
\begin{bmatrix}
1\\
0\\
0
\end{bmatrix},
\end{align*}
for all $\sigma\in \Sigma$. 
Then, by computation,  
\begin{align*}
 \begin{bmatrix}
\int^1_0 B(\sigma) \mathrm{d}\sigma & \int_0^1A(\sigma)B(\sigma)\mathrm{d}\sigma & \int_0^1A^2(\sigma)B(\sigma)\mathrm{d}\sigma
\end{bmatrix} 
= 
\begin{bmatrix}
1 & 1/2 & 1/3\\
0 & 1/3 & 1/4\\
0 & 1/4 & 1/5\\
\end{bmatrix},
\end{align*}
which is of full rank, rendering the resulting system~\eqref{eq:ensemble} averaged controllable by Theorem~\ref{theorem:averaged}. 
This shows that $\G$ is structurally averaged controllable. 
\end{proof}

Proposition~\ref{prop:nontrivialexample} shows that a necessary and sufficient condition for a sparsity pattern $\G$ to be structurally averaged controllable is different from those existing ones about structural (ensemble) controllability (e.g., Theorems~\ref{thm:sin} and~\ref{thm:XC}). 
Having a complete characterization of any such condition appears to be hard. We provide below a few necessary {\em or} sufficient conditions. 

\subsection{On necessary conditions}\label{subsection:necessary}
In this section, we present a few necessary conditions for a sparsity pattern $\G$ to be structurally averaged controllable. We first have the following result:

\begin{lemma}\label{lem:casek0}
	If $\G$ is structurally averaged controllable, then $\G$ is accessible to $\beta$-nodes. 
\end{lemma}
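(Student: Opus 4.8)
The plan is to argue by contraposition: I will assume that $\G$ is \emph{not} accessible to the $\beta$-nodes and show that then no compliant pair can be averaged controllable, so that $\G$ fails to be structurally averaged controllable.

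First I would isolate the obstruction set. Let $S \subseteq V_\alpha$ be the collection of $\alpha$-nodes that are \emph{not} reachable by any directed path from a $\beta$-node. Failure of accessibility means $S \neq \emptyset$. The key combinatorial observation is that $S$ is closed under taking in-neighbors: if some $\alpha_j \in S$ had an in-neighbor $v \in N_{\rm in}(\{\alpha_j\})$ lying outside $S$ — either another $\alpha$-node (which, not being in $S$, is reachable from some $\beta$-node) or a $\beta$-node itself — then $\alpha_j$ would inherit a path from a $\beta$-node, contradicting $\alpha_j \in S$. Hence $N_{\rm in}(S) \subseteq S$, and in particular no $\beta$-node is an in-neighbor of any node in $S$.

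Next I would translate this into matrix structure. Let $I \subseteq \{1,\ldots,n\}$ be the index set of $S$. By the edge conventions, $N_{\rm in}(S) \subseteq S$ forces, for every compliant $(A,B) \in \V(\G)$, that $A_{jk}(\sigma) \equiv 0$ whenever $j \in I$ and $k \notin I$, and that $B_{jl}(\sigma) \equiv 0$ whenever $j \in I$. After reordering coordinates so that the indices in $I$ come first, this says that $A(\sigma)$ is block lower-triangular with a vanishing upper-right block and that the rows of $B(\sigma)$ indexed by $I$ vanish identically. This block structure is preserved under multiplication, so every power $A^k(\sigma)$ is again block lower-triangular with a zero $(I, I^c)$ block; combined with the vanishing of the rows of $B$ indexed by $I$, this yields that the rows of $A^k(\sigma)B(\sigma)$ indexed by $I$ vanish for every $k \geq 0$ and every $\sigma$.

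Finally I would pass to the integrals and invoke the rank criterion. Since those rows of $A^k B$ vanish pointwise in $\sigma$, the same rows of $\int_0^1 A^k(\sigma)B(\sigma)\,\mathrm{d}\sigma$ vanish for all $k$. Consequently the columns of $\{\int_0^1 A^k(\sigma)B(\sigma)\,\mathrm{d}\sigma\}_{k \ge 0}$ all lie in the coordinate subspace $\{x \in \real^n : x_I = 0\}$, whose dimension is $n - |I| < n$. By Theorem~\ref{theorem:averaged} the resulting system is not averaged controllable. As this holds for \emph{every} compliant pair, $\G$ is not structurally averaged controllable, which is the contrapositive of the claim. I expect the only point requiring care to be the bookkeeping in the third paragraph — correctly reading off, from the stated edge-direction convention, which blocks of $A$ and $B$ are forced to vanish, and checking that block lower-triangularity is inherited by all powers — but this is routine linear algebra rather than a genuine obstacle; the essential content is the elementary reachability argument establishing $N_{\rm in}(S) \subseteq S$.
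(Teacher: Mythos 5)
Your proof is correct and takes essentially the same route as the paper: the block structure you derive after reordering (zero rows of $B$ indexed by $I$ and a vanishing $(I,I^c)$ block of $A$, inherited by all powers) is exactly the ``Form I'' decomposition the paper invokes from Lin~\cite{CTL:74}, and the conclusion via the rank criterion of Theorem~\ref{theorem:averaged} is identical. The only difference is that you derive this block form from first principles, via the closure of the unreachable set under in-neighbors, where the paper simply cites it.
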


\begin{proof}
Suppose that $\G$ is not accessible to $\beta$-nodes; then, by relabelling (if necessary) the $\alpha$-nodes, we have that every pair $(A, B)\in \V(\G)$ takes the following form (the so-called ``Form I'' in~\cite{CTL:74}):   	
$$
A =
\begin{bmatrix}
	A_{11} & 0\\ 
	A_{21} & A_{22}
\end{bmatrix} \quad \mbox{and} \quad 
B = 
\begin{bmatrix}
	0 \\
	B_2
\end{bmatrix},
$$
where $A_{11}$ is $r\times r$ for $r \ge 1$, $A_{22}$ is $(n-r)\times (n-r)$, and $B_2$ is $(n -r)\times m$. It should be clear that for any $k\ge 0$, 
$$
\int^1_0 A^k(\sigma) B(\sigma) \mathrm{d}\sigma = 
\begin{bmatrix}
	0 \\
	\int^1_0 A^k_{22}(\sigma) B_{2}(\sigma) \mathrm{d} \sigma
\end{bmatrix}. 
$$  
Thus, $\{\int^1_0A^k(\sigma)B(\sigma) \mathrm{d}\sigma\}^\infty_{k = 0}$ cannot span $\real^n$. 
\end{proof}

The above result shows that condition (1) in Theorems~\ref{thm:sin} and~\ref{thm:XC} is also necessary for $\G$ to be structurally averaged controllable. 

Lemma~\ref{lem:casek0} can be generalized as follows. For a nonnegative integer $k$, let $U_\alpha(k)$ be the set of $\alpha$-nodes $\alpha_i$ such that there does {\em not} exist a walk from any $\beta$-node to $\alpha_i$ of length greater than $k$. For the case $k=0$, if $U_\alpha(0)$ is non-empty, then $\G$ is not accessible to the $\beta$-nodes. For a general $k$, we have the following result:

\begin{proposition}\label{prop:generalk}
Let $m$ be the number of $\beta$-nodes in $\G$. 
If $\G$ is structurally averaged controllable, then there does {\em not} exist a nonnegative integer $k$ such that $|U_\alpha(k)|> mk$.  
\end{proposition}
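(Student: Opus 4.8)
The plan is to argue the contrapositive: assuming there is a nonnegative integer $k$ with $|U_\alpha(k)| > mk$, I will show that no pair $(A,B) \in \V(\G)$ can render the system averaged controllable, and hence that $\G$ is not structurally averaged controllable. The key preliminary step is a walk-counting interpretation of the entries of $A^j B$. With the indexing convention of the excerpt (recall $A_{ji}\neq 0$ iff $(\alpha_i,\alpha_j)\in \E$ and $B_{jl}\neq 0$ iff $(\beta_l,\alpha_j)\in\E$), a direct unwinding of matrix multiplication shows that the $(i,l)$ entry of $A^j B$ is a sum of weight-products taken over walks in $\G$ of length $j+1$ from $\beta_l$ to $\alpha_i$. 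Consequently, if $\G$ contains no walk of a given length from any $\beta$-node to $\alpha_i$, then the corresponding entry of $A^jB$ is identically zero for \emph{every} compliant pair, not merely for a generic one, since the sparsity pattern of any $(A,B)\in\V(\G)$ is a subgraph of $\G$.

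Next I would feed in the definition of $U_\alpha(k)$. For $\alpha_i\in U_\alpha(k)$ there is no walk from any $\beta$-node to $\alpha_i$ of length greater than $k$; a walk of length $j+1$ exceeds $k$ exactly when $j\geq k$, so by the walk interpretation the $\alpha_i$-row of $A^jB$ vanishes identically for all $j\geq k$ and all $(A,B)\in\V(\G)$. I would then form the block matrix $M=\bigl[\,\int_0^1 B\,\mathrm{d}\sigma \;\; \int_0^1 AB\,\mathrm{d}\sigma \;\; \int_0^1 A^2B\,\mathrm{d}\sigma\;\cdots\bigr]$ from Theorem~\ref{theorem:averaged}, and restrict attention to the submatrix $M_U$ consisting of the rows indexed by $U:=U_\alpha(k)$. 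By the previous step, every column of $M_U$ coming from a block $\int_0^1 A^jB\,\mathrm{d}\sigma$ with $j\geq k$ is identically zero; the only blocks that can contribute nonzero entries to $M_U$ are $\int_0^1 B\,\mathrm{d}\sigma,\ldots,\int_0^1 A^{k-1}B\,\mathrm{d}\sigma$, which amount to $k$ blocks of $m$ columns each, i.e. $mk$ columns in total. Hence $\operatorname{rank} M_U \leq mk$.

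Finally I would close with a rank count. Since $M_U$ has $|U|>mk$ rows but rank at most $mk$, its rows are linearly dependent; as each row of $M_U$ is the full corresponding row of $M$, those rows of $M$ are linearly dependent as well, so $\operatorname{rank} M < n$. By Theorem~\ref{theorem:averaged} the span of the columns of $M$ has rank less than $n$, so the system is not averaged controllable. Because this conclusion holds uniformly over all $(A,B)\in\V(\G)$, the pattern $\G$ is not structurally averaged controllable, which is exactly the contrapositive of the claim. (Note also that $|U|\leq n$ and $|U|>mk$ force $mk<n$, so $k<n$ and the relevant blocks lie within the first $n$; one may equivalently work with the finite matrix $\bigl[\int_0^1 A^jB\,\mathrm{d}\sigma\bigr]_{j=0}^{n-1}$.)

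The step I expect to require the most care is the walk-counting claim, and in particular the off-by-one bookkeeping between matrix powers and walk lengths (a walk of length $j+1$ pairs with the power $A^j$), together with the observation that the resulting vanishing is structural — identical across all compliant pairs — rather than merely generic. Once that is set up, the remainder is a clean dimension count.
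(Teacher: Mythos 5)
Your proof is correct, and while it follows the same overall skeleton as the paper's --- show the rows of $\int_0^1 A^{j}(\sigma)B(\sigma)\,\mathrm{d}\sigma$ indexed by $U_\alpha(k)$ vanish identically for $j\ge k$, then bound the total span by $(n-|U_\alpha(k)|)+mk<n$ and invoke Theorem~\ref{theorem:averaged} --- you establish the key vanishing step by a different and arguably cleaner route. The paper argues via the $i$th row of $A^k$: a nonzero $(i,j)$ entry of $A^k$ gives a walk of length $k$ from $\alpha_j$ to $\alpha_i$ \emph{within the $\alpha$-subgraph}, and to contradict the definition of $U_\alpha(k)$ (which concerns walks from $\beta$-nodes) the authors must concatenate this with a path from a $\beta$-node to $\alpha_j$; that concatenation is only available when $\G$ is accessible, which forces the paper to split off the $k=0$ case and dispose of it separately via Lemma~\ref{lem:casek0}. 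Your direct walk-counting interpretation of $(A^jB)_{il}$ --- a sum of weight-products over walks of length $j+1$ from $\beta_l$ to $\alpha_i$, with the off-by-one handled correctly since $B$ itself encodes edges, i.e.\ walks of length one --- contradicts the definition of $U_\alpha(k)$ without any accessibility hypothesis, so your argument treats all $k\ge 0$ uniformly and needs neither the case split nor Lemma~\ref{lem:casek0}. The closing rank count is also sound: the submatrix $M_U$ has at most $mk$ nonzero columns, hence rank at most $mk<|U_\alpha(k)|$, so the rows of $M$ indexed by $U_\alpha(k)$ are linearly dependent and $\operatorname{rank} M<n$; and you rightly emphasize that the vanishing is structural (identical across all compliant pairs), which is exactly what quantifying over all of $\V(\G)$ requires. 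What the paper's version buys is a reusable structural fact about rows of $A^k$ itself; what yours buys is economy and uniformity.
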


\begin{proof}	
We assume that such $k \ge 0$ exists and show that $\G$ cannot be structurally averaged controllable. 

	If $k = 0$, then $|U_\alpha(0)| > 0$, i.e., $U_\alpha(0)$ is nonempty, so $\G$ is not accessible to $\beta$-nodes. By Lemma~\ref{lem:casek0}, $\G$ is not structurally averaged controllable. We thus assume for the remainder of the proof that $k$ is strictly positive and $\G$ is accessible to the $\beta$-nodes.

   For any $(A, B)\in \V(\G)$, we show that the resulting ensemble system~\eqref{eq:ensemble} is not averaged controllable. 
	Consider the $i$th row of $A^k$, for any $\alpha_i\in U_\alpha(k)$. We claim that the row is identically zero. Suppose not, say the $ij$the entry of $A^k$ is not identically zero; then there exists a walk of length $k$ in $\G$ from $\alpha_j$ to $\alpha_i$. By assumption, $\G$ is accessible to the $\beta$-nodes, there exists a path from a $\beta$-node to $\alpha_j$. Concatenating the path with the walk, we obtain a walk from the $\beta$-node to $\alpha_i$ of length greater than $k$, which is a contradiction. 
	
	It then follows that for any $\alpha_i\in U_\alpha(k)$ and for any $\ell \ge k$, the $i$th row of $A^\ell B$ is identically zero. Thus, the span of the columns of $\{\int_0^1 A^\ell(\sigma) B(\sigma)\mathrm{d}\sigma\}^\infty_{\ell = k}$ has dimension at most $(n - |U_\alpha(k)|)$.      
	
	However, there are only $mk$ column vectors in $\{\int_0^1 A^\ell(\sigma) B(\sigma)\mathrm{d}\sigma\}^{k-1}_{\ell = 0}$, so the dimension of their span is at most $mk$. Thus, the span of $\{\int_0^1 A^\ell(\sigma) B(\sigma)\mathrm{d}\sigma\}^\infty_{\ell = 0}$ has dimension at most $(n - |U_\alpha(k)|) + mk < n$. By Theorem~\ref{theorem:averaged}, the ensemble system~\eqref{eq:ensemble} cannot be averaged controllable. 
\end{proof}

We have the following example to illustrate the  necessary condition in Proposition~\ref{prop:generalk}. 

\begin{example}\label{ex:cycle_not_sufficient}
Consider the graph $ \G $ shown in Fig.~\ref{fig:cycle_not_sufficient}. There is a single $\beta$-node and, hence, $m = 1$. 
The graph is accessible to the  $\beta$-node. Thus, it satisfies the necessary condition stated in Lemma~\ref{lem:casek0}. However, it does not satisfy the necessary condition stated in Proposition~\ref{prop:generalk}. Specifically, we have that 
 $$U_\alpha(k) = 
 \begin{cases}
 	 \varnothing & \mbox{if } k = 0, \\
 	 \{\alpha_2\} & \mbox{if } k = 1, \\
 	 \{\alpha_2,\alpha_3,\alpha_4\} & \mbox{if } k\ge 2.
 \end{cases}
$$
In particular, for $k = 2$, we have that $|U_\alpha(2)| > 2$. Thus, by Propositon~\ref{prop:generalk}, $\G$ is {\em not} structurally averaged controllable. 
\end{example} 
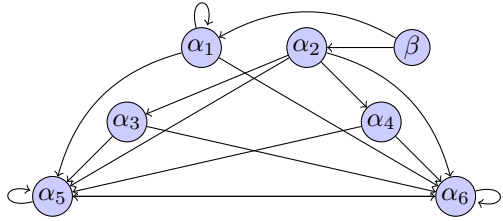
\begin{figure}[htb!]
\begin{center}
\begin{tikzpicture}[scale=0.9,node distance=1.4cm]
    \node[main node] (1) {$\alpha_1$};
    \node[main node] (2) [right of=1]  {$\alpha_2$};
                \node[main node] (7) [right of=2]  {$\beta$};
    \node[main node] (3) [below left of= 1] {$\alpha_3$};
    \node[main node] (4) [below right of= 2] {$\alpha_4$};
    \node[main node] (5) [below left of= 3] {$\alpha_5$}; 
        \node[main node] (6) [below right of= 4] {$\alpha_6$}; 
    \path[draw,->]
    (7) edge[bend right]  (1)
    (7) edge  (2)
    (1) edge[loop above] (1)
    (2) edge  (3)
    (2) edge  (4)
    (1) edge[bend right]  (5)
    (2) edge  (5)    
    (3) edge  (5)
    (4) edge  (5)
    (5) edge[loop left] (5)
    (6) edge  (5)
    (1) edge  (6)    
    (2) edge[bend left]  (6)    
    (3) edge  (6)        
    (4) edge  (6)    
    (5) edge  (6)    
    (6) edge[loop right] (6);        
    \end{tikzpicture}
\end{center}
\caption{Graph considered in  Example~\ref{ex:cycle_not_sufficient}.}\label{fig:cycle_not_sufficient}
\end{figure}

The above example also shows that adding more cycles or self-loops in the subgraph induced by the $\alpha$-nodes is in favor of the necessary condition stated in Proposition~\ref{prop:generalk}. Indeed, if we add a self-loop to node $\alpha_2$ in the graph shown in Fig.~\ref{fig:cycle_not_sufficient}, then $U_\alpha(k) = \varnothing$ for all $k \ge 0$.  

The importance of having a cycle or a self-loop is even more significant when $\G$ does {\em not} satisfy condition (2) in Theorem~\ref{thm:sin}, i.e., there exists a subset $V' \subseteq V_\alpha$ such that $|N_{\rm in}(V')| \ge |V'|$. For instance, in the example given in the proof of Proposition~\ref{prop:nontrivialexample}, if we remove the self-loop on node $\alpha_1$, then $A^k B\equiv 0$ for any $k\ge 2$ and for any $(A, B)\in \V(\G)$. Thus, the span of the columns $\{\int_0^1 A^k(\sigma) B(\sigma)\mathrm{d}\sigma\}^\infty_{k = 0}$ has dimension at most $2$, so the modified $\G$ is not structurally averaged controllable.  
 To further formalize the observation, we have the following result as a corollary to Proposition~\ref{prop:generalk}:
 
\begin{corollary}\label{cor:cycleforValpha}
If $\G$ is structurally averaged controllable, with a single $\beta$-node, and if $|N_{\rm in}(V_\alpha)| < |V_\alpha| = n$, then  $\G$  cannot be acyclic.  	
\end{corollary}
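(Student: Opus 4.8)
The plan is to derive this as a direct application of Proposition~\ref{prop:generalk} by exhibiting, under the stated hypotheses, a single integer $k$ that violates its necessary condition. Since $m = 1$ here, Proposition~\ref{prop:generalk} asserts that a structurally averaged controllable $\G$ admits no $k$ with $|U_\alpha(k)| > k$; so it suffices to produce one such $k$ whenever $\G$ is acyclic and $|N_{\rm in}(V_\alpha)| < |V_\alpha| = n$. I would argue by contradiction, supposing $\G$ is acyclic, and take the specific value $k = n - 1$.

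First I would record the elementary consequence of acyclicity. Since no edge enters a $\beta$-node, every directed cycle must lie among the $\alpha$-nodes, so acyclicity of $\G$ coincides with acyclicity of the subgraph induced by $V_\alpha$. In an acyclic digraph every walk is in fact a path, i.e., it visits each vertex at most once; hence any walk starting at the (unique) $\beta$-node has length at most $n$, with the bound attained only by a path through all $n + 1$ vertices.

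The heart of the argument is the claim $U_\alpha(n-1) = V_\alpha$, equivalently that no $\alpha$-node is reachable from the $\beta$-node by a walk of length $\ge n$. Suppose such a walk existed. By the previous paragraph it is a path on $n + 1$ distinct vertices, hence passes through the $\beta$-node and all $n$ $\alpha$-nodes: it is a Hamiltonian path $\beta \to \alpha_{i_1} \to \cdots \to \alpha_{i_n}$. Reading off its edges, the $\beta$-node and each of $\alpha_{i_1}, \ldots, \alpha_{i_{n-1}}$ has an outgoing edge terminating at an $\alpha$-node, so all $n$ of these vertices belong to $N_{\rm in}(V_\alpha)$; this gives $|N_{\rm in}(V_\alpha)| \ge n$, contradicting the hypothesis $|N_{\rm in}(V_\alpha)| < n$ and establishing the claim.

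With the claim in hand the proof closes quickly: $|U_\alpha(n-1)| = |V_\alpha| = n > n - 1$, so $k = n - 1$ witnesses the forbidden inequality $|U_\alpha(k)| > mk$ with $m = 1$, and Proposition~\ref{prop:generalk} forces $\G$ to fail structural averaged controllability, which is the contrapositive of the corollary. I expect the only delicate point to be the counting step showing that a maximum-length walk must be Hamiltonian and therefore pins down exactly $n$ distinct in-neighbors; everything else is bookkeeping. This choice of $k$ also subsumes the non-accessible case, since any $\alpha$-node unreachable from the $\beta$-node already lies in $U_\alpha(0) \subseteq U_\alpha(n-1)$.
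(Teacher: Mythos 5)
Your proof is correct and takes essentially the same route as the paper's: both choose $k = n-1$, use acyclicity to argue that any walk of length at least $n$ from the $\beta$-node would be a Hamiltonian path whose vertices force $|N_{\rm in}(V_\alpha)| \ge n$ (contradicting the hypothesis), conclude $U_\alpha(n-1) = V_\alpha$, and then apply Proposition~\ref{prop:generalk} with $|U_\alpha(n-1)| = n > (n-1)m$ for $m=1$. The only cosmetic difference is that the paper phrases the claim via the inclusion chain $U_\alpha(0)\subseteq \cdots \subseteq U_\alpha(n) = V_\alpha$ before sharpening to $n-1$, whereas you argue directly that a length-$\ge n$ walk must be a Hamiltonian path of length exactly $n$; the substance is identical.
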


\begin{proof} 
We assume that $G$ is acyclic and show that $G$ is not structurally averaged controllable. 
First, by definition of $U_\alpha(k)$, we have the  inclusion sequence: 
$U_\alpha(0)\subseteq U_\alpha(1) \subseteq \cdots \subseteq U_\alpha(n)$. 
Since $\G$ is acyclic and since there are $n$ $\alpha$-nodes and one single $\beta$-node, there exists an integer $k \le n$ such that $U_\alpha(k)= V_\alpha$. We show below that $k$ can be chosen to be $(n-1)$.  
Suppose not, i.e., 
$U_\alpha(n-1) \subsetneq U_\alpha(n) = V_\alpha$;
then, there exists a node $\alpha_i$ and a walk from the $\beta$-node to $\alpha_i$ of length $n$. Because $\G$ is acyclic, the walk has to be a {\em path}, i.e., all nodes appearing in the walk are distinct. 
More specifically, if we express the path explicitly as $\beta, \alpha_{j_1}\ldots \alpha_{j_n}$ with $\alpha_{j_n} = \alpha_i$, then the nodes $\alpha_{i_\ell}$, for $\ell = 1,\ldots, n$, are the $n$ distinct nodes in $V_\alpha$. But, this implies that $N_{\rm in}(V_\alpha)$ includes at least $n$ nodes, namely, $\alpha_{j_1},\ldots, \alpha_{j_{n-1}}$ and $\beta$, which contradicts the assumption that $|N_{\rm in}(V_\alpha)| < |V_\alpha| = n$. 
Now, we have that $|U_\alpha(n-1)| = n > (n-1)$. Thus, by Proposition~\ref{prop:generalk}, $\G$ is not structurally averaged controllable, concluding the proof. 
\end{proof}

The above corollary can slightly be generalized as follows (we omit the proof because the arguments are similar to the ones in the above proof): 

\begin{corollary}\label{corollary:cycle-necessary}
Let $\G$ be structurally averaged controllable. 
Suppose that there exist a subset $V'_\alpha\subseteq V_\alpha$ and a $\beta$-node, say $\beta_i$, such that 
\begin{equation}\label{eq:subset-cycle}
N_{\rm in}(V'_\alpha) \subseteq V'_\alpha \cup \{\beta_i\} \quad \mbox{and} \quad |N_{\rm in}(V'_\alpha)| < |V'_\alpha|;
\end{equation}
then, the subgraph of $\G$ induced by $V'_\alpha$    is not acyclic. 
\end{corollary}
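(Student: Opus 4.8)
The plan is to mimic the proof of Corollary~\ref{cor:cycleforValpha} by localizing the argument to the subset $V'_\alpha$. The authors explicitly note that the arguments are ``similar,'' so the strategy is to adapt the acyclicity-plus-counting contradiction used there, but now restricting attention to walks that stay inside $V'_\alpha$ after leaving $\beta_i$. The key structural input is the first inclusion in~\eqref{eq:subset-cycle}: since $N_{\rm in}(V'_\alpha)\subseteq V'_\alpha\cup\{\beta_i\}$, any walk arriving at a node of $V'_\alpha$ must, at the previous step, already be at a node of $V'_\alpha$ or at $\beta_i$. This is exactly the property that lets the counting argument proceed locally, because it prevents walks into $V'_\alpha$ from being ``fed'' by $\alpha$-nodes or $\beta$-nodes outside of $V'_\alpha\cup\{\beta_i\}$.

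First I would argue by contradiction, assuming the subgraph induced by $V'_\alpha$ is acyclic, and aim to show $\G$ is not structurally averaged controllable via Proposition~\ref{prop:generalk}. Set $r:=|V'_\alpha|$. Because the induced subgraph on $V'_\alpha$ is acyclic and every in-neighbor of a node in $V'_\alpha$ lies in $V'_\alpha\cup\{\beta_i\}$, any walk terminating in $V'_\alpha$ that has length at least one must be of the form $\beta_i,\alpha_{j_1},\dots,\alpha_{j_\ell}$ with all the $\alpha_{j_s}$ in $V'_\alpha$ and, by acyclicity, all distinct. Hence such a walk has length at most $r$, and there is no walk from $\beta_i$ to any node of $V'_\alpha$ of length exceeding $r$. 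As in the referenced proof, I would then show the sharper bound that every node of $V'_\alpha$ lies in $U_\alpha(r-1)$: if some $\alpha_i\in V'_\alpha$ admitted a walk from a $\beta$-node of length exactly $r$, that walk would have to pass through $\beta_i$ and then traverse $r$ distinct $\alpha$-nodes of $V'_\alpha$, forcing $N_{\rm in}(V'_\alpha)$ to contain $r-1$ distinct $\alpha$-nodes together with $\beta_i$, i.e.\ $|N_{\rm in}(V'_\alpha)|\ge r$, contradicting $|N_{\rm in}(V'_\alpha)|<r$.

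With this in hand, every node of $V'_\alpha$ belongs to $U_\alpha(r-1)$, so $|U_\alpha(r-1)|\ge r$. The only remaining wrinkle is that Proposition~\ref{prop:generalk} is phrased in terms of the total number of $\beta$-nodes $m$, whereas here I have control only relative to the single $\beta$-node $\beta_i$. Concretely, I need $|U_\alpha(r-1)|>m(r-1)$, but $m$ could be larger than one. The main obstacle is therefore bridging this gap: the localized inclusion $N_{\rm in}(V'_\alpha)\subseteq V'_\alpha\cup\{\beta_i\}$ guarantees that only $\beta_i$ can inject walks into $V'_\alpha$, which is morally the $m=1$ situation. I expect the cleanest resolution is either to sharpen the bookkeeping in the proof of Proposition~\ref{prop:generalk} so that, for nodes in $U_\alpha(k)$ fed by a single $\beta$-node, the number of relevant columns in $\{\int_0^1 A^\ell B\,\mathrm{d}\sigma\}_{\ell=0}^{k-1}$ is only $k$ rather than $mk$, or to invoke a localized version of the dimension count directly. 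Either way, the conclusion is that the span of $\{\int_0^1 A^\ell(\sigma)B(\sigma)\,\mathrm{d}\sigma\}_{\ell\ge 0}$ has dimension strictly less than $n$, so by Theorem~\ref{theorem:averaged} no $(A,B)\in\V(\G)$ yields an averaged controllable system, contradicting structural averaged controllability of $\G$.
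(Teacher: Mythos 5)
Your overall route is exactly the localization of Corollary~\ref{cor:cycleforValpha} that the paper intends, and your combinatorial half is correct: under~\eqref{eq:subset-cycle}, any walk from a $\beta$-node ending in $V'_\alpha$ must start at $\beta_i$ and stay in $V'_\alpha$ thereafter (a $\beta$-node has no incoming edges, so $\beta_i$ cannot occur mid-walk), acyclicity of the induced subgraph makes its $\alpha$-nodes distinct, and a walk of length exactly $r:=|V'_\alpha|$ would force $|N_{\rm in}(V'_\alpha)|\ge r$, contradicting~\eqref{eq:subset-cycle}; hence no walk from a $\beta$-node to $V'_\alpha$ has length exceeding $r-1$. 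You also correctly spotted the real subtlety that the paper's ``the arguments are similar'' remark glosses over: Proposition~\ref{prop:generalk} cannot be cited verbatim, since $|U_\alpha(r-1)|\ge r$ does not exceed $m(r-1)$ once $m\ge 2$.

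The one genuine gap is that you leave the decisive step hedged (``I expect the cleanest resolution is either \ldots or \ldots'') rather than proving it; fortunately it closes in two lines from the structural fact you already isolated. A nonzero entry $(A^\ell B)_{sj}$ with $\alpha_s\in V'_\alpha$ corresponds to a walk of length $\ell+1$ from $\beta_j$ to $\alpha_s$, and by the argument above any such walk starts at $\beta_i$, so $j=i$: for every $\ell\ge 0$ the rows of $A^\ell B$ indexed by $V'_\alpha$ are supported on the single input column $i$, and by your length bound these rows vanish identically for $\ell\ge r-1$. Consequently, in the family $\{\int_0^1 A^\ell(\sigma)B(\sigma)\,\mathrm{d}\sigma\}_{\ell\ge 0}$, the submatrix of rows indexed by $V'_\alpha$ has at most the $r-1$ nonzero columns coming from $\ell=0,\ldots,r-2$ in input $i$, hence rank at most $r-1$; the remaining rows contribute rank at most $n-r$, so the total rank is at most $(n-r)+(r-1)=n-1<n$ for every $(A,B)\in \V(\G)$, and Theorem~\ref{theorem:averaged} contradicts structural averaged controllability. (Note this is the ``localized dimension count'' you proposed; your alternative phrasing about sharpening Proposition~\ref{prop:generalk} for all of $U_\alpha(r-1)$ would be slightly off, since nodes of $U_\alpha(r-1)$ outside $V'_\alpha$ may be fed by several $\beta$-nodes --- the count must be restricted to the rows of $V'_\alpha$.) With this inserted, your proof is complete and matches the paper's intended argument.
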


\subsection{Toward sufficient conditions}\label{subsection:sufficient}

In this section, we investigate sufficient conditions for structural averaged controllability. Throughout, we focus on the case where $ m=1 $. 
Corollary~\ref{cor:cycleforValpha} demonstrates the need for existence of a cycle in $ V_\alpha$ whenever $|N_{\rm in}(V_\alpha)|< |V_\alpha|$.
A simple case we will focus on is that this cycle is a self-loop. More specifically, we have the following assumption:

\begin{assumption}\label{assumption:1-cycle}
There is only one single $\beta$-node. There exists a node $\alpha_1\in V_\alpha$ such that $\alpha_1$ has a self-loop and $(\beta,\alpha_1)$ is an edge of $\G$. Moreover, the subgraph $\G_\alpha$ induced by $V_\alpha$ contains a directed spanning tree $\mathcal{T}_\alpha$ with $\alpha_1$ the root.  
\end{assumption}

The sparsity pattern given in the proof of Proposition~\ref{prop:nontrivialexample} is a typical example that satisfies Assumption~\ref{assumption:1-cycle}.  
The next result generalizes the pattern: 

\begin{theorem}\label{theorem:rooted}
Suppose that $\G$ satisfies Assumption~\ref{assumption:1-cycle} and that there is an edge from $\alpha_1$ to any other $\alpha$-node; then $\G $ is structurally averaged controllable.
\end{theorem}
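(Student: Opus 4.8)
The plan is to construct an explicit pair $(A,B) \in \V(\G)$ and verify that the matrix with columns $\{\int_0^1 A^k(\sigma)B(\sigma)\,\mathrm{d}\sigma\}_{k=0}^{n-1}$ has full rank $n$; by Theorem~\ref{theorem:averaged} this suffices for averaged controllability. The structural hypotheses give us exactly the room we need: since $\alpha_1$ carries a self-loop, the $(1,1)$ entry of $A$ may be taken nonzero; since $(\beta,\alpha_1)$ is an edge, the first entry of $B$ may be nonzero; and since there is an edge from $\alpha_1$ to every other $\alpha$-node, the entire first column of $A$ may be populated. This is the generalization of the pattern in Proposition~\ref{prop:nontrivialexample}, where $A$ had a single nonzero column (the first) feeding all states and $B = e_1$. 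The natural guess, mirroring that example, is to set $B(\sigma) := e_1$ (the first standard basis vector) and to let the first column of $A(\sigma)$ consist of distinct powers of $\sigma$, say $A_{i1}(\sigma) = \sigma^{p_i}$ for a suitable choice of exponents $p_1 < p_2 < \cdots < p_n$, with $A$ zero elsewhere. The spanning-tree condition with root $\alpha_1$ is automatically satisfied once every node is reachable in one step from $\alpha_1$, so the edge-from-$\alpha_1$ hypothesis actually subsumes it here.

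First I would compute the powers $A^k B$ for this choice. Because $B = e_1$ and $A$ has only its first column nonzero, $A$ acts on $e_1$ by reading off the first column $a(\sigma) = (\sigma^{p_1},\ldots,\sigma^{p_n})^\top$; and $A$ acts on any vector by taking its first coordinate and multiplying by $a(\sigma)$. Hence $A\,e_1 = a(\sigma)$, and for $k \ge 1$ one gets $A^k e_1 = a_1(\sigma)^{k-1} a(\sigma)$ where $a_1(\sigma) = \sigma^{p_1}$ is the self-loop weight. Therefore $A^k B = \sigma^{(k-1)p_1} a(\sigma)$ for $k \ge 1$, while $A^0 B = e_1$. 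Integrating entrywise, the $k$-th column (for $k \ge 1$) is $\big(\int_0^1 \sigma^{(k-1)p_1 + p_i}\,\mathrm{d}\sigma\big)_{i=1}^n = \big(1/((k-1)p_1 + p_i + 1)\big)_{i=1}^n$, together with the zeroth column $e_1$. The resulting matrix is a Hilbert-type (generalized Cauchy) matrix, and the crux of the proof is to show its determinant is nonzero.

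The hard part will be establishing the nonsingularity of this matrix of reciprocals. This is precisely the ``variations of Hilbert matrices'' problem the abstract and introduction advertise as being treated in the Appendix, so I expect to invoke a determinant formula of Cauchy type: a matrix with entries $1/(x_r + y_s)$ has the Cauchy determinant $\prod_{r<r'}(x_r - x_{r'})\prod_{s<s'}(y_s - y_{s'}) / \prod_{r,s}(x_r + y_s)$, which is nonzero whenever the $x_r$ are distinct and the $y_s$ are distinct and no denominator vanishes. I would arrange the exponents $p_i$ so that the two families of parameters entering the row and column indices are each distinct — distinctness of the $p_i$ handles the rows, and the arithmetic progression structure in $k$ (governed by $p_1 \ge 1$) handles the columns. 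A clean way to sidestep the $e_1$ column separately is to expand the determinant along that column, reducing to an $(n-1)\times(n-1)$ Cauchy determinant on the lower-right block, which is manifestly nonzero. In writing the final proof I would either cite the Appendix's Hilbert-matrix result directly or reproduce the short Cauchy-determinant argument, and then conclude full rank and hence, by Theorem~\ref{theorem:averaged}, averaged controllability, so that $\G$ is structurally averaged controllable.
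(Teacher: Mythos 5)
Your proposal is correct and is essentially the paper's own construction: the paper likewise sets $b_1\equiv 1$ and $a_{i1}(\sigma)=\sigma^{i}$ (your choice $p_i=i$), yielding $\int_0^1\mathcal{C}(\sigma)\,\mathrm{d}\sigma=H_n(1)$, the Hilbert matrix with the subdiagonal part of its first column zeroed out. The only divergence is the final invertibility step: the paper's proof cites Theorem~\ref{thm:1} of the Appendix (matrix determinant lemma plus binomial identities), whereas your expansion along the $e_1$ column down to an $(n-1)\times(n-1)$ Cauchy minor is precisely the simpler alternative the paper itself notes in the remark immediately following its proof --- and your version with general distinct exponents $p_1<\cdots<p_n$, $p_1\ge 1$, is a mild generalization that the Cauchy determinant handles just as well.
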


We postpone the proof of this result, as it will follow as a corollary of Theorem~\ref{theorem:k-cycle} below, which we focus on next. We first gather some useful terminologies.  

\subsubsection{A partition for $ V_\alpha $}
Let $\T_\alpha$ be the directed spanning tree of $\G_\alpha$ as introduced in Assumption~\ref{assumption:1-cycle}.  
For any $\alpha_i$, there exists a {\em unique} path from the root $\alpha_1$ to $\alpha_i$ within $\T_\alpha$. We let the depth of $\alpha_i$ be the length of the path. Denote by $V_\alpha(k)$, for $k\ge 0$, the subset of $V_\alpha$ that is composed of all nodes of depth~$k$. For $k = 0$, the set $V_\alpha(0)$ is a singleton $\{\alpha_1\}$. 
A partition of $V_\alpha$ based on the depths of nodes can be obtained as follows:  
\begin{equation}\label{eq:partition}
V_\alpha=\cup_{k=0}^p V_\alpha(k), 
\end{equation}
where $p$ is the maximum depth. We will use such a partition later for constructing an averaged controllable $(A, B)$ pair. 

\subsubsection{Variations of Hilbert matrices}
We start by recalling some preliminaries on Hilbert matrices, which we utilize in the statement of our main result. 
Let $ \N $ denote the set of non-negative integers and let $ n \in \N_{\geq 2} $. A Hilbert matrix $ H_n $ is an $ n \times n $ matrix whose entries are 
\[
(H_n)_{ij}=\frac{1}{i+j-1}. 
\]
Hilbert matrices are invertible, and in fact positive definite, and their inverse can explicitly be computed; in particular, 
\begin{align*}
(H^{-1}_n)_{ij}=(-1)^{i+j}&(i+j-1)\binom{n+i-1}{n-j}\\
&\times \binom{n+j-1}{n-i}\binom{i+j-2}{i-1}^2.
\end{align*}

Let $ e_i \in \real^n $ be the unit vector with the $ i $th entry equal one. For a $ \gamma \in \{1,\ldots, n\} $, 
let $ \{\ell_i\}_{i=1}^{\gamma} $ be a \emph{non-decreasing} sequence of positive integers such that $ i\le \ell_i < n $ for all $ i $. Let $ u^{(\ell_{i})} \in \real^n $ be defined through its components as 
\[
u^{(\ell_{i})}_j:=
\begin{cases}
0 & j\leq \ell_{i},\\
\frac{1}{i+j-1} & j> \ell_{i}.\\
\end{cases}
\]
Now, for the given sequence $\{\ell_i\}^\gamma_{i = 1}$,  we let 
\begin{equation}\label{eq:zero-Hilbert}
H_n(\ell_1,\cdots, \ell_{\gamma}):=H_n-\sum_{i=1}^{\gamma}u^{(\ell_i)} e^\top_i.
\end{equation}
Note that $ H_n(\ell_1,\cdots, \ell_{\gamma}) $ is simply obtained by setting entries in column $ i $ with row larger than $ \ell_i $ in $ H_n $ to zero. For example, for $ n=5 $ and $ \gamma=3 $ we have that 
\begin{equation}\label{eq:example-H}
H_n(2,2,3)=
\begin{bmatrix}
1&1/2&1/3&1/4&1/5\\
1/2&1/3&1/4&1/5&1/6\\
 0&  0&1/5&1/6&1/7\\
 0&  0&  0&1/7&1/8\\
 0&  0&  0&1/8&1/9
\end{bmatrix}.
\end{equation}
For a special case, we prove in Corollary~\ref{corollary:Hilbert-zero} in the Appendix that $ H_n(\ell_1,\cdots, \ell_{\gamma}) $ as defined above is invertible. In fact, we conjecture that  $ H_n(\ell_1,\cdots, \ell_{\gamma}) $ is in general invertible; this conjecture appears to be difficult to establish. For the purpose of our work, we make this as an assumption:
\begin{assumption}\label{assumption:LA}
Suppose that $ V_\alpha $ is partitioned as in~\eqref{eq:partition}. We assume that $ H_n(\ell_1,\cdots, \ell_{p}) $, with~$ \ell_j $ given as
\begin{equation}\label{eq:ell}
\ell_j:=1+\sum_{k=1}^{j-1} |V_\alpha(k)|,
\end{equation}
is invertible. 
\end{assumption}

Note that in~\eqref {eq:ell}, we have that $ \ell_j \geq j $. This holds because the cardinality of $ V_\alpha(k) $ is at least one, for $ 1\leq k \leq j-1 $.

\subsubsection{A constructive sufficiency result}

With the preliminaries above, 
we are now in a position  to state the main result of this section:

\begin{theorem}\label{theorem:k-cycle}
Suppose that $ \G $ satisfies Assumptions~\ref{assumption:1-cycle} and~\ref{assumption:LA}; then, $ \G $ is structurally averaged controllable.
\end{theorem}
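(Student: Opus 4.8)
The plan is to invoke Theorem~\ref{theorem:averaged}: it suffices to exhibit a single pair $(A,B)\in\V(\G)$ for which the $n$ columns $\int_0^1 A^k(\sigma)B(\sigma)\,\mathrm{d}\sigma$, $k=0,\dots,n-1$, are linearly independent, i.e.\ form an invertible $n\times n$ matrix $M$. Since $m=1$, each such integral is a single column, so $M$ is square and it is enough to prove $\det M\neq 0$. I would first relabel the $\alpha$-nodes so that their indices are nondecreasing in their depth within the spanning tree $\T_\alpha$ of Assumption~\ref{assumption:1-cycle}, with $\alpha_1$ the root of depth $0$. This makes every parent's index strictly smaller than its child's, and, writing $d(i)$ for the depth of $\alpha_i$, it arranges that positions $1,\dots,\ell_j$ are \emph{precisely} the nodes of depth $<j$, where $\ell_j$ is as in~\eqref{eq:ell}; equivalently $i>\ell_j \iff d(i)\ge j$.

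The construction generalizes the example in the proof of Proposition~\ref{prop:nontrivialexample}. I set $B:=e_1$ (admissible since $(\beta,\alpha_1)\in\E$), assign the self-loop weight $A_{11}(\sigma):=\sigma$, and to each tree edge from a parent $\alpha_{i'}$ to its child $\alpha_i$ assign $A_{i i'}(\sigma):=\sigma^{\,i-i'+1}$; all remaining entries of $A$ are set to zero. This pair lies in $\V(\G)$ because every nonzero entry corresponds to an edge of $\G$ (the self-loop at $\alpha_1$ and the tree edges of $\T_\alpha\subseteq\G_\alpha$). The exponents are chosen so that the product of edge weights along the unique tree path from $\alpha_1$ to $\alpha_i$ multiplies out to $\sigma^{\,i-1+d(i)}$, independently of anything downstream.

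Next I would compute $M$ explicitly. Because the only nonzero off-root structure is the tree (directed away from the root) and the sole self-loop is at $\alpha_1$, the only walks of length $k$ from node $1$ to node $i$ consist of $k-d(i)$ self-loops at the root followed by the unique root-to-$\alpha_i$ tree path; in particular no such walk exists when $k<d(i)$, and no cancellation can occur since each $(A^k)_{i1}$ is a single monomial. Hence, using $B=e_1$, one gets $(A^kB)_i=(A^k)_{i1}=\sigma^{\,i+k-1}$ for $k\ge d(i)$ and $(A^kB)_i\equiv 0$ for $k<d(i)$, so that $M_{i,k+1}=\tfrac{1}{i+k}$ for $k\ge d(i)$ and $M_{i,k+1}=0$ otherwise. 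Writing $j=k+1$, the entry in row $i$, column $j$ equals $\tfrac{1}{i+j-1}$ exactly when $j>d(i)$ and is zero exactly when $j\le d(i)$. Invoking the index identity $i>\ell_j\iff d(i)\ge j$ from the relabeling, together with the fact that $d(i)\ge j$ forces $j\le p$, this zero pattern coincides with the staircase defining the variation of the Hilbert matrix, and therefore $M=H_n(\ell_1,\dots,\ell_p)$. Assumption~\ref{assumption:LA} then guarantees invertibility, so $M$ has rank $n$ and Theorem~\ref{theorem:averaged} yields averaged controllability, proving $\G$ structurally averaged controllable.

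The genuine work is bookkeeping rather than a single hard obstacle, and the two delicate points are: (i) checking that the assigned exponents make the path products telescope to $\sigma^{\,i-1+d(i)}$ so that every activated entry lands exactly on the Hilbert value $\tfrac{1}{i+j-1}$ and not merely on some nonzero number; and (ii) verifying that the depth-based ordering aligns the activation times $d(i)$ with the zero staircase encoded by the $\ell_j$ in~\eqref{eq:ell}, so that $M$ is \emph{literally} $H_n(\ell_1,\dots,\ell_p)$ and not just a matrix with the same support. Finally, Theorem~\ref{theorem:rooted} falls out as the special case $p=1$, in which $\T_\alpha$ is a star and the needed invertibility is supplied unconditionally by Corollary~\ref{corollary:Hilbert-zero} rather than assumed.
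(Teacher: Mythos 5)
Your proof is correct and takes essentially the same route as the paper's: the identical construction ($b_1\equiv 1$, self-loop weight $\sigma$, tree-edge weights $\sigma^{i-i'+1}$, depth-sorted relabeling), the same identification $\int_0^1\mathcal{C}(\sigma)\,\mathrm{d}\sigma = H_n(\ell_1,\dots,\ell_p)$ with the $\ell_j$ of~\eqref{eq:ell}, and the same conclusion via Assumption~\ref{assumption:LA} and Theorem~\ref{theorem:averaged}. Your explicit unique-walk decomposition ($k-d(i)$ root self-loops followed by the unique tree path, so each $(A^k)_{i1}$ is a single monomial $\sigma^{i+k-1}$ with no cancellation) is a slightly more transparent rendering of the step the paper summarizes as the relationship between powers of $A$ and path lengths.
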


\begin{proof}
The proof is constructive. 
We first describe the process of assigning scalar functions to the entries of the pair $ (A,B) \in \V(\G) $. After that, we show that the constructed pair is structurally averaged controllable. We still let $\T_\alpha $ be the directed spanning tree of $\G_\alpha $ as introduced in Assumption~\ref{assumption:1-cycle}. 
We specify below the entries of $A$ and $B$ that correspond to the edge $(\beta, \alpha_1)$ and the edges in $\T_\alpha$. All the other entries of $A$ and $B$ are set to be identically zero.  

The edge $(\beta, \alpha_1)$ corresponds to the first entry $b_1$ of $B$ (note that $B$ is a column since $m = 1$). We set $b_1\equiv 1$. 

The edges in $\T_\alpha$ correspond to the entires in $A$. We re-label, if necessary, the $\alpha$-nodes so that the sub-indices of nodes in $ V_\alpha(k)$ are larger than those in $ V_\alpha(k')$, whenever $ k>k' $. 
To this end, we perform the following assignments: 
\begin{itemize}
\item For the self-loop $ (\alpha_1,\alpha_1)$, we let $ a_{11}\in \mathrm{C}^{0}(\Sigma,\real) $ be given by $ a_{11}: \sigma\mapsto \sigma $; 
\item For each edge $ (\alpha_1,\alpha_i) $ with $ \alpha_i \in V_\alpha(1) $,
we let $ a_{i1}\in \mathrm{C}^{0}(\Sigma,\real) $ be given by $ a_{i1}: \sigma\mapsto \sigma^{i} $; 
\item For each edge $ (\alpha_i,\alpha_j) $ with $ \alpha_i \in V_\alpha(k) $ and $ \alpha_j \in V_\alpha(k+1)$, where $ 1\leq k\leq p$, we let $ a_{ij}\in \mathrm{C}^{0}(\Sigma,\real) $ be given by $ a_{ij}: \sigma\mapsto \sigma^{j-i+1} $. 
\end{itemize}
An example of such assignment is given by Fig.~\ref{fig:main6_cont}. 
We then define a matrix-valued function as follows: 
\begin{align*}
\mathcal{C}:=[B\,\, AB\, 
\cdots \, A^{n-1}B].
\end{align*}
We now claim that for the $ (A,B) \in \V(\G) $ constructed above and for the sequence $ \{\ell_j\}_{j=1}^{p} $ of positive integers with $ \ell_j $ defined in~\eqref{eq:ell}, with $ \ell_j \geq j $, the following equality is satisfied: 
\begin{equation}\label{eq:integralofC}
\int_0^1\mathcal{C}(\sigma)\mathrm{d}\sigma= H_n(\ell_1,\cdots, \ell_{p}).
\end{equation}
Note that, by Assumption~\ref{assumption:LA}, this will prove that $ \G $ is structurally averaged controllable. We now prove this claim.

We show that the for any $j = 1,\ldots, p$, the last $(n - \ell_j)$ entries of the $j$th column of $\mathcal{C}$ are identically zero, where $\ell_j$ is defined in~\eqref{eq:ell}. 
The first column of $\mathcal{C}$ is simply the vector $B$. By construction, only the first entry~$b_1$ of $B$ is not identically zero. 
Next, let $\T$ be the subgraph of $\G$ obtained by adding the $\beta$-node and the edge $(\beta,\alpha_1)$ to $\T_\alpha$. The $i$th entry of $A^{j-1}B$ (i.e., the $j$th column of $\mathcal{C}$), for $j \in \{2,\ldots,p\}$, 
is not identically zero if and only if there is a path from $\beta$ to $\alpha_i$ in $\T$ of length less than or equal to~$j$ (or, equivalently, there is a path from $\alpha_1$ to $\alpha_i$ in $\T_\alpha$ of length less than~$j$).  
By definition of $V_\alpha(k)$, if $\alpha_i\in V_\alpha(k)$ for $k \ge j$, there does not exist a path from $\alpha_1$ to $\alpha_i$ of length less than~$j$ and, hence, the $i$th entry of the $j$th column in $\mathcal{C}$ is identically zero. 
Thus, by the ordering chosen for the $\alpha$-nodes, the last $(n - \ell_j)$  entries of column $j$ in $ \mathcal{C}$, for $ j \in \{2,\ldots, p\} $, are identically zero. 

For the other entries of $\mathcal{C}$ i.e., entries $\mathcal{C}_{ij}$ with the depth of $\alpha_i$ strictly less than $j$, 
we use again the relationship between powers of $A$ and the lengths of paths from node $\alpha_1$, and obtain that 
\begin{equation*}\label{eq:Cij}
\mathcal{C}_{ij} = 
a_{11}^{(j-1)-i^\star} a_{i_1,1} \cdots a_{i,i_{i^\star-1}}
\end{equation*}
where  $ i^\star$ is the depth of $\alpha_i$ and $(\alpha_i,\alpha_{i_{i^\star-1}},\cdots,\alpha_{i_1},\alpha_1)$ is the {\em unique path} from $\alpha_1$ to $\alpha_i $ in~$\T_\alpha$. 

Now, with the choice of the nonzero entries described in the above items, we have that 
\begin{align*}\label{eq:Cij-replaced}
\mathcal{C}_{ij} (\sigma)&
= \begin{cases}
\sigma^{(j-1)-i^\star}\sigma^{i_1} \cdots 
\sigma^{i-i_{i^\star-1}+1}
& i^*<j \\
0  & i^*\geq j 
\end{cases}\nonumber\\
&= \begin{cases}
\sigma^{(i+j-2)}
& i^*<j \\
0  & i^*\geq j. 
\end{cases}
\end{align*}
We thus conclude that~\eqref{eq:integralofC} holds. 
\end{proof}

We remark here that the graph-theoretic construction of the controllability matrix in our proof is universal, in that any other assignment would still lead to study of a matrix with the same sparsity pattern as the one obtained for the class of sparse Hilbert matrices. The result of Theorem~\ref{theorem:k-cycle} relies on the invertibility Assumption~\ref{assumption:LA}. The highlight of this result is that it reduces the problem of structural averaged controllability to a completely linear algebraic conjecture on invertibility of a class of sparse Hilbert matrices.  Theorem~\ref{thm:1} in the Appendix addresses a specific scenario where we know this assumption can be removed, and indeed the proof of Theorem~\ref{theorem:rooted} can be deduced using this.

\quad {\em Proof of Theorem~\ref{theorem:rooted}:}
According to the procedure proposed in Theorem~\ref{theorem:k-cycle}, we let $ a_{i1}: \sigma \mapsto \sigma^{i} $, and  $b_1\equiv 1 $. Clearly, $ \ell_1=1 $. Since all other nodes have $ \alpha_1 $ as the in-neighbor in $\T_\alpha$, $ V_\alpha(1) $ contains all the remaining nodes and, hence, $ \ell_2=n $, which by our convention in~\eqref{eq:zero-Hilbert}, means that 
\[
\int_0^1\mathcal{C}(\sigma)\mathrm{d}\sigma= H_n(\ell_1), 
\]
and it is invertible by Theorem~\ref{thm:1}.
\hfill{\subqed}

This result does not fully utilize Theorem~\ref{thm:1}, however, as one could verify this invertibility by using the fact that the first column of $H_n(\ell_1)$ has only one nonzero entry and, hence, the determinant is nonzero as the principal minor obtained by removing the first column/row is a Cauchy matrix itself. This being said, one can generate further instances where Assumption~\ref{assumption:1-cycle} can be removed by considering Corollary~\ref{corollary:Hilbert-zero}, which ensures invertibility of matrices such as the one displayed in~\eqref{eq:example-H}. We end the section with an illustration. 

\begin{example}\label{ex:main6_cont}
Consider the graph in Fig.~\ref{fig:main6_cont}.
The graph has a self-loop on node $\alpha_1$ shown in red. The subgraph $\G_\alpha$ is itself a directed tree. Therefore, 
$
V=\cup^4_{k = 0} V_\alpha(k)
$,  
where 
\begin{align*}
&V_\alpha(0)=\{\alpha_1\}, \
V_\alpha(1)=\{\alpha_2\}, \
V_\alpha(2)=\{\alpha_3,\alpha_4\}, \\
&V_\alpha(3)=\{\alpha_5\},  \
V_\alpha(4)=\{\alpha_6\}. 
\end{align*}
We have displayed the assignment given in the proof of Theorem~\ref{theorem:k-cycle} for the pair $(A, B)\in \V(\G)$. The corresponding nonzero entries are also illustrated in Fig.~\ref{fig:main6_cont}. 
\begin{figure}[tb!]
\begin{center}
\begin{tikzpicture}[scale=1.0,node distance=1.4cm]
    \node[main node] (1) {$\alpha_1$};
    \node[main node] (2) [right of=1]  {$\alpha_2$};
                \node[main node] (7) [right of=2]  {$\beta$};
    \node[main node] (3) [below left of= 1] {$\alpha_3$};
    \node[main node] (4) [below of= 2] {$\alpha_4$};
    \node[main node] (5) [below of= 3] {$\alpha_5$}; 
        \node[main node] (6) [below left of= 4] {$\alpha_6$}; 
    \path[draw,->]
    (7) edge [dashed] [bend right] node[above] {\footnotesize$1$} (1)
    (1) edge [red] [loop above] node[above] {\footnotesize$\sigma$}  (1)
    (1) edge node[above right] {\footnotesize$\sigma^2$} (2)
    (2) edge node[below] {\footnotesize$\sigma^2$} (3)
    (2) edge node[right] {\footnotesize$\sigma^3$} (4)
    (3) edge node[left] {\footnotesize$\sigma^3$} (5)
    (5) edge node[above] {\footnotesize$\sigma^2$} (6);    
    \end{tikzpicture} 
\end{center}
\caption{Graph considered in Example~\ref{ex:main6_cont} and the assignment of scalar functions to the entries corresponding to the edges.}\label{fig:main6_cont}
\end{figure}
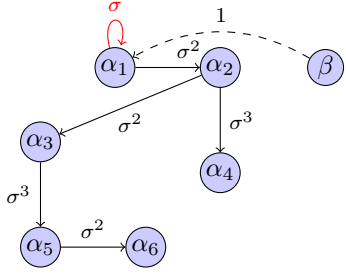
We have that 
$
\int_0^1\mathcal{C}(\sigma)\mathrm{d}\sigma
=H_6(1,2,4,5).
$
It is easy to observe that $
(\ell_1,\ell_2,\ell_3,\ell_4)=(1,2,4,5) $
agrees with~\eqref{eq:ell}. This matrix is readily invertible and, hence, by Theorem~\ref{theorem:k-cycle}, the system is structurally averaged controllable. 
\end{example}

\section{Conclusions}
We introduce and address a novel problem of structural averaged controllability for linear ensemble systems. Some necessary or sufficient conditions for a sparsity pattern to be structurally averaged controllable are provided. Although the parameterization space is chosen to be the closed interval $\Sigma = [0,1]$, we believe that the results established hold for general continuum spaces. Future work include characterizing necessary {\em and} sufficient conditions for sparsity patterns to be structurally averaged controllable, studying the minimal controllability problem~\cite{olshevsky2015minimum}, and extending the results to, e.g., bilinear systems~\cite{AT-MAB-BG:18}.    


\bibliographystyle{ieeetr}%
\bibliography{alias,BG,bib}

\begin{thebibliography}{10}

\bibitem{RT:75}
R.~Triggiani, ``Controllability and observability in banach space with bounded
  operators,'' {\em SIAM Journal on Control and Optimization}, vol.~13, no.~2,
  pp.~462--491, 1975.

\bibitem{li2015ensemble}
J.-S. Li and J.~Qi, ``Ensemble control of time-invariant linear systems with
  linear parameter variation,'' {\em IEEE Transactions on Automatic Control},
  vol.~61, no.~10, pp.~2808--2820, 2015.

\bibitem{helmke2014uniform}
U.~Helmke and M.~Sch{\"o}nlein, ``Uniform ensemble controllability for
  one-parameter families of time-invariant linear systems,'' {\em Systems \&
  Control Letters}, vol.~71, pp.~69--77, 2014.

\bibitem{chen2020controllability}
X.~Chen, ``Controllability issues of linear ensemble systems,'' {\em
  arXiv:2003.04529}, 2020.

\bibitem{EZ:14}
E.~Zuazua, ``Averaged control,'' {\em Automatica}, vol.~50, no.~12,
  pp.~3077--3087, 2014.

\bibitem{CTL:74}
C.-T. Lin, ``Structural controllability,'' {\em IEEE Transactions on Automatic
  Control}, vol.~19, no.~3, pp.~201--208, 1974.

\bibitem{XC:20}
X.~Chen, ``Sparse linear ensemble systems and structural controllability,''
  {\em arXiv preprint arXiv:2004.02783}, 2020.

\bibitem{shields1976structural}
R.~Shields and J.~Pearson, ``Structural controllability of multiinput linear
  systems,'' {\em IEEE Transactions on Automatic control}, vol.~21, no.~2,
  pp.~203--212, 1976.

\bibitem{glover1976characterization}
K.~Glover and L.~Silverman, ``Characterization of structural controllability,''
  {\em IEEE Transactions on Automatic control}, vol.~21, no.~4, pp.~534--537,
  1976.

\bibitem{olshevsky2015minimum}
A.~Olshevsky, ``Minimum input selection for structural controllability,'' in
  {\em 2015 American Control Conference (ACC)}, pp.~2218--2223, IEEE, 2015.

\bibitem{AT-MAB-BG:18}
A.~Tsopelakos, M.-A. Belabbas, and B.~Gharesifard, ``Classification of the
  structurally controllable zero-patterns for driftless bilinear control
  systems,'' {\em IEEE Transactions on Control of Network Systems}, vol.~6,
  pp.~429--439, 2018.

\end{thebibliography}

\vspace{-0.3cm}

\newcounter{mycounter}
\renewcommand{\themycounter}{A.\arabic{mycounter}}
\newtheorem{propositionappendix}[mycounter]{Proposition}
\newtheorem{conjectureappendix}[mycounter]{Conjecture}
\newtheorem{theoremappendix}[mycounter]{Theorem}
\newtheorem{sublemmaappendix}[mycounter]{Sublemma}
\newtheorem{corollaryappendix}[mycounter]{Corollary}

\appendix

\section{Variations of Hilbert matrices with zero entries}\label{sec:appendix}

We have the following conjecture: 
\begin{conjectureappendix}
The matrix $ H_n(\ell_1,\cdots, \ell_{\alpha}) $ is invertible. 
\end{conjectureappendix}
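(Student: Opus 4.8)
The plan is to recognize $H_n(\ell_1,\dots,\ell_\gamma)$ as a structured perturbation of a strictly totally positive matrix and to prove invertibility through total-positivity theory rather than by a direct determinant computation. First I would record two structural facts. The Hilbert matrix $H_n$ is a Cauchy matrix, $(H_n)_{ij}=1/(x_i+y_j)$ with $x_i=i$ and $y_j=j-1$ strictly increasing, and is therefore \emph{strictly} totally positive: every minor is strictly positive. Second, because the sequence $\{\ell_j\}$ is non-decreasing, the set of entries that are zeroed out, $Z=\{(i,j): j\le\gamma,\ i>\ell_j\}$, is a bottom-left staircase (a Young-diagram region): for each row $i$ the zeroed columns form an initial segment $\{1,\dots,c(i)\}$ with $c(i)$ non-decreasing in $i$. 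Crucially, the constraint $\ell_j\ge j$ guarantees that no diagonal entry is removed, so, writing $M:=H_n(\ell_1,\dots,\ell_\gamma)$, we have $M_{ii}=1/(2i-1)>0$ for all $i$.

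The core of the argument is the lemma that zeroing a bottom-left Young-diagram corner of a strictly totally positive matrix produces a totally nonnegative matrix, i.e. all minors remain $\ge 0$; in particular $\det M\ge 0$. I would prove this via the Lindström--Gessel--Viennot planar-network description of total nonnegativity: a totally positive matrix is the source-to-sink path-weight matrix of a planar acyclic weighted network, each minor equals the generating function of families of vertex-disjoint paths, and all such weights are positive. Deleting the edges that realize the staircase corner removes certain path families but can never introduce a negative contribution, so every minor stays nonnegative. An alternative, more self-contained route is induction on $n$ (or on the number of staircase steps) using Desnanot--Jacobi/Dodgson condensation, or Schur-complementation across the block of untruncated columns $\{\gamma+1,\dots,n\}$; here one exploits the classical fact that Schur complements of Cauchy matrices are again Cauchy matrices with shifted nodes, so the reduced problem is of the same type and the induction closes.

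To upgrade nonnegativity to strict positivity (hence invertibility), I would use the surviving diagonal together with strict total positivity of $H_n$. In the planar-network picture the identity matching corresponds to at least one vertex-disjoint path family that survives the edge deletions (every diagonal edge is retained), and by the no-cancellation property of the Lindström--Gessel--Viennot sum this family contributes a strictly positive term with no compensating negative terms, giving $\det M>0$. Equivalently, one can expand by generalized Laplace along the truncated columns $C=\{1,\dots,\gamma\}$, writing $\det M=\sum_{|I|=\gamma}\pm\det(M[I,C])\,\det(H_n[I^c,C^c])$; each complementary factor $\det(H_n[I^c,C^c])$ is a genuine Hilbert minor and hence strictly positive, while the total-nonnegativity lemma applied to the sub-block $M[I,C]$ (again Cauchy-with-staircase) fixes the sign of each surviving term, so the whole sum is a sum of nonnegative terms with at least one --- the one compatible with the diagonal --- strictly positive.

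The main obstacle is exactly this last strictness step in full generality. Establishing total nonnegativity is comparatively safe, but ruling out that the staircase of zeros conspires to drop the rank --- certifying that the positive contributions are never cancelled and never all vanish simultaneously --- is a genuine no-cancellation statement for a non-symmetric, non-Hankel matrix, and it is presumably the reason the authors record it as a conjecture. The two settings that are provably transparent, namely a single truncated column ($H_n(\ell_1)$, Theorem~\ref{thm:1}) and the staircase realized in Corollary~\ref{corollary:Hilbert-zero}, are exactly those where a single surviving minor manifestly dominates; the difficulty is to make the path-family and sign bookkeeping uniform across all admissible non-decreasing sequences $\{\ell_j\}$.
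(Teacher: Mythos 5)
You are attempting to prove a statement that the paper itself records only as a conjecture: the authors prove the case $\gamma=1$ (Theorem~\ref{thm:1}), via the matrix determinant lemma and an explicit evaluation of the binomial sum $Z(\ell,n)$, together with the block-triangular extension in Corollary~\ref{corollary:Hilbert-zero}, and explicitly state that the general case ``appears to be difficult to establish.'' Your plan does not close this gap either---you concede as much in your final paragraph---but the problem is worse than a missing strictness step: your central lemma is \emph{false}. Zeroing a bottom-left staircase of a strictly totally positive matrix does not yield a totally nonnegative matrix, and the smallest nontrivial instance of the paper's own family refutes it. Take $n=3$, $\gamma=1$, $\ell_1=2$, so that only the $(3,1)$ entry of $H_3$ is set to zero. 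Cofactor expansion along the first column gives
$\det H_3(2)=\det H_3-\tfrac{1}{3}\bigl(\tfrac{1}{2}\cdot\tfrac{1}{4}-\tfrac{1}{3}\cdot\tfrac{1}{3}\bigr)=\tfrac{1}{2160}-\tfrac{1}{3}\cdot\tfrac{1}{72}=-\tfrac{9}{2160}<0$,
which matches the paper's formula $\det H_n(\ell)=\bigl(1-Z(\ell,n)/n^2\bigr)\det H_n$ with $Z(2,3)=90$, giving the factor $-9$. Indeed, from~\eqref{eq:Z-form1} the sign of $1-Z(\ell,n)/n^2$ alternates with the parity of $\ell$, so $\det H_n(\ell_1,\dots,\ell_\gamma)$ is \emph{not} of constant sign across admissible sequences. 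No argument representing the determinant as a sum of nonnegative contributions (total nonnegativity, Lindstr\"om--Gessel--Viennot no-cancellation, or your generalized Laplace expansion ``sum of nonnegative terms'') can therefore be correct; invertibility here genuinely requires controlling cancellation, not excluding it.

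There is also a conceptual flaw in the LGV step independent of the sign issue: in a planar-network realization of $H_n$, each edge contributes to the path weights of \emph{many} source--sink pairs, so deleting edges cannot zero out a prescribed staircase of entries while leaving all other entries of the matrix unchanged. The operation $H_n\mapsto H_n(\ell_1,\dots,\ell_\gamma)$ is an entrywise surgery, not a network minor, so the matrix you produce is simply not the path matrix of any subnetwork, and the machinery does not apply. Likewise, the proposed Schur-complement induction fails at the first step, since the truncated matrix is no longer Cauchy and its Schur complements do not return to the same class. The paper's provable cases succeed precisely because they avoid positivity altogether: Theorem~\ref{thm:1} treats $H_n(\ell)$ as a rank-one update of $H_n$ and shows the scalar correction factor $1-e_1^\top H_n^{-1}u^{(\ell)}=1-Z(\ell,n)/n^2$ is nonzero by a monotonicity analysis of $|Z(\ell,n)|$; any viable attack on the full conjecture would more plausibly proceed through a low-rank-update or determinant-identity route of that kind, where signed cancellation is quantified rather than wished away.
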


We now prove a preliminary version of this conjecture for $ \alpha=1 $. For this purpose, let $ e_1 \in \real^n $ be the unit vector with first entry one and for $ \ell \in \N_{\geq 1} $, let $ u^{(\ell)} \in \real^n $ be defined through its $ j $th component: 
\[
u^{(\ell)}_j:=
\begin{cases}
0 & j\leq \ell,\\
\frac{1}{j} & j> \ell.\\
\end{cases}
\]
\begin{theoremappendix}\label{thm:1}
The matrix $ H_n(\ell):=H_n-u^{(\ell)} e^\top_1$ is invertible for any $ \ell \in \N_{\geq 1} $.
\end{theoremappendix}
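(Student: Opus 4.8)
The plan is to reduce the determinant of the rank-one modification $H_n(\ell)=H_n-u^{(\ell)}e_1^\top$ to a single scalar condition and then evaluate that scalar in closed form. Since $H_n$ is invertible (indeed positive definite), the matrix determinant lemma gives
\[
\det H_n(\ell)=\det(H_n)\bigl(1-e_1^\top H_n^{-1}u^{(\ell)}\bigr),
\]
so $H_n(\ell)$ is invertible if and only if the scalar $\mu(\ell):=e_1^\top H_n^{-1}u^{(\ell)}$ is different from $1$. This isolates the entire problem into controlling one number.

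The next step is to compute $\mu(\ell)$ explicitly using the closed form for the first row of $H_n^{-1}$ recalled in the excerpt. Setting $i=1$ in the inverse formula yields $(H_n^{-1})_{1j}=(-1)^{1+j}\,j\binom{n}{j}\binom{n+j-1}{j}$, and since $u^{(\ell)}_j=1/j$ for $j>\ell$ and $0$ otherwise, the factor $1/j$ cancels the leading $j$, leaving the alternating binomial tail
\[
\mu(\ell)=\sum_{j=\ell+1}^{n}(-1)^{j+1}\binom{n}{j}\binom{n+j-1}{j}.
\]
The heart of the argument is to evaluate this sum. Writing $d_j:=\binom{n}{j}\binom{n+j-1}{j}$, the telescoping relation $\mu(\ell)=\mu(\ell-1)+(-1)^\ell d_\ell$ together with the clean base value $\mu(0)=1$ — which holds because $u^{(0)}$ is exactly the first column $H_n e_1$ of $H_n$, so $\mu(0)=e_1^\top e_1=1$ — lets me prove by induction the product formula
\[
\mu(\ell)=(-1)^\ell\,c_\ell,\qquad c_\ell:=\prod_{k=1}^{\ell}\frac{n^2-k^2}{k^2}.
\]
The inductive step rests on the elementary identity $\ell^2 d_\ell=n^2\,c_{\ell-1}$, equivalently $d_\ell=c_\ell+c_{\ell-1}$, which is a routine factorial manipulation.

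Finally I would conclude by showing $\mu(\ell)\ne 1$ for every $1\le \ell\le n-1$ (for $\ell\ge n$ one has $u^{(\ell)}=0$ and $H_n(\ell)=H_n$). Each factor $n^2-k^2$ is positive for $k\le \ell\le n-1$, so $c_\ell>0$ and $\mu(\ell)=\pm c_\ell$; for odd $\ell$ this is already negative, hence $\ne 1$. The only delicate case is even $\ell$, where I must rule out $c_\ell=1$. Here I would use that $\ell\mapsto c_\ell$ is unimodal — its successive ratio $c_\ell/c_{\ell-1}=(n^2-\ell^2)/\ell^2$ decreases through $1$ — so its minimum over $1\le\ell\le n-1$ is attained at an endpoint, and $c_1=n^2-1\ge 3$ while $c_{n-1}=\binom{2n-1}{n-1}\ge 3$; thus $c_\ell\ge 3>1$ throughout and $\mu(\ell)\ne 1$. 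I expect the main obstacle to be the third step: discovering and justifying the product closed form for the alternating binomial tail (a partial sum of a terminating ${}_2F_1$), since partial hypergeometric sums rarely telescope. The saving grace is the transparent base case $\mu(0)=1$ and the exact recurrence $d_\ell=c_\ell+c_{\ell-1}$, which together make the induction go through cleanly and also hand us the quantitative bound $|\mu(\ell)|\ge 3$ needed to separate $\mu(\ell)$ from $1$.
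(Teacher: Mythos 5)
Your proposal is correct and takes essentially the same route as the paper's proof: the matrix determinant lemma, the explicit first row of $H_n^{-1}$, an inductive closed form for the alternating binomial tail (your $c_\ell$ is exactly the paper's $Z(\ell,n)/n^2$), and the same unimodality/endpoint comparison $c_1=n^2-1$ versus $c_{n-1}=\binom{2n-1}{n-1}$. The only cosmetic difference is that you induct forward from the clean base case $\mu(0)=1$ (via the observation $u^{(0)}=H_n e_1$), whereas the paper inducts backward from $\ell=n-1$.
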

\begin{proof}
By the matrix determinant lemma, we have
\begin{align*}
\det(H_n(\ell))
=&(1-e^\top_1 H_n^{-1}u^{(\ell)})\det(H_n) \\
=&(1-\sum_{k=1}^n (H_n^{-1})_{1k}u^{(\ell)}_k)\det(H_n)\\
=&(1-\sum_{k=1}^n (-1)^{(k+1)}k\binom{n}{n-k}\binom{n+k-1}{n-1}
u^{(\ell)}_k)\det(H_n)\\
=&\left (1-\sum_{k=\ell+1}^n (-1)^{(k+1)}\binom{n}{n-k}\binom{n+k-1}{n-1} \right )\det(H_n).
\end{align*}
Clearly, the result follows if we prove that for $ n \in \N_{\geq 2} $ and $ \ell \in \{1,\ldots, n-1\} $,
\begin{equation}\label{eq:sum-not-1}
\sum_{k=\ell+1}^n (-1)^{(k+1)}\binom{n}{n-k}\binom{n+k-1}{n-1}\neq 1.
\end{equation}
To this end, using a backward induction on $ \ell $, one can show that
\[
\sum_{k=\ell+1}^n(-1)^{(k+1)}\binom{n}{n-k}\binom{n+k-1}{n-1}=\frac{1}{n^2}Z(\ell,n), 
\]
where 
\begin{equation}\label{eq:Z-form1}
Z(\ell,n):=(-1)^{\ell}(\ell+1)^2\binom{n}{n-\ell-1}\binom{n+\ell}{n-1}.
\end{equation}
Indeed. The statement can be seen to hold for $ \ell=n-1$. Suppose now by the way of induction that the statement holds for some $ \ell $, and let us prove that it also holds for $ \ell-1$. Note that 
\begin{align*}
n^2\sum_{k=\ell}^n(-1)^{(k+1)}&\binom{n}{n-k}\binom{n+k-1}{n-1}\\
=&n^2\sum_{k=\ell+1}^n(-1)^{(k+1)}\binom{n}{n-k}\binom{n+k-1}{n-1}\\
&+(-1)^{\ell+1}n^2\binom{n}{n-\ell}\binom{n+\ell-1}{n-1}\\
=&(-1)^{\ell}(\ell+1)^2\binom{n}{n-\ell-1}\binom{n+\ell}{n-1}\\
&+(-1)^{\ell+1}n^2\binom{n}{n-\ell}\binom{n+\ell-1}{n-1}\\
=&(-1)^{\ell-1}
\left(-(\ell+1)^2\times \frac{n-\ell}{\ell+1}\times \frac{n+\ell}{\ell+1}+n^2
\right)\binom{n}{n-\ell}\binom{n+\ell-1}{n-1}\\
=&(-1)^{\ell-1}\ell^2\binom{n}{n-\ell}\binom{n+\ell-1}{n-1}\\
=& Z(\ell-1,n),
\end{align*}
where we have used the induction assumption to arrive at the second equality. This finishes the induction argument. 

Using this result, in order to prove~\eqref{eq:sum-not-1}, it is enough to show that $ Z(\ell,n) \neq n^2 $, for  $ \ell \in \{1,\ldots, n-1\} $ (note that $ Z(0,n)=n^2 $). The next result is a stepping stone. 
\begin{sublemmaappendix}\label{sublemma-pick}
For $ n \in \N_{\geq 2}  $, we have that 
\[
\argmin_{\ell \in  \{1,\ldots,n-1\}} |Z(\ell,n)|=1,
\]
where $ Z(\cdot,n) $ is given by~\eqref{eq:Z-form1}.
\end{sublemmaappendix}
\begin{proof}
We first show that $ |Z(\cdot,n)| $ is increasing on $ \{1,\ldots, \ell^*\} $ and decreasing on $ \{\ell^*,\ldots, n-1\} $, where~$
\ell^*=~\lfloor\frac{n}{\sqrt{2}}\rfloor.
$
Note that for $ \ell \in  \{1,\ldots,n-1\} $, we have that
\[
Z(\ell,n)=-(\frac{n^2}{\ell^2}-1)Z(\ell-1,n), 
\]
and, hence 
\begin{equation*}
Z(\ell,n)-Z(\ell-1,n)=-\frac{n^2}{\ell^2}Z(\ell-1,n).
\end{equation*}
We also have 
\begin{equation*}
|Z(\ell,n)|-|Z(\ell-1,n)|=(\frac{n^2}{\ell^2}-2)|Z(\ell-1,n)|.
\end{equation*}
Hence, $ |Z(\ell,n)|>|Z(\ell-1,n)| $ if $ \ell\le \ell^* $ and $ |Z(\ell,n)|<|Z(\ell-1,n)| $, otherwise. As a result, the minimum value of $ |Z(\ell,n)| $ over $  \{1,\ldots,n-1\} $ occurs either at $ \ell=1 $ or at $\ell=n-1 $. We have that 
\[
|Z(1,n)|=n^4-n^2 \quad \mathrm{and} \quad |Z(n-1,n)|=n^2\binom{2n-1}{n-1}.
\]
Noting that $ |Z(1,n)| < |Z(n-1,n)| $ for all $ n \in \N_{\geq 3} $ yields the result. 
\end{proof}
By Sublemma~\ref{sublemma-pick}, $ \min_{\ell \in  \{1,\ldots,n-1\}} |Z(\ell,n)|=n^4-n^2 $, which cannot be equal to $ n^2 $ for any $ n \in \N_{\geq 2} $, proving that~\eqref{eq:sum-not-1} holds, yielding the claim.
\end{proof}

We now provide an extension of this result. Let $ H_n^{(j)} $ be a principal minor of the Hilbert matrix $ H_n $ of size $ r_j \times r_j $, where $ j \in \{1,\ldots, k\} $ and $ k \in \N_{\geq 1} $ and and let 
\begin{equation}\label{eq:Hjp}
H_n^{(j)}(p_j)= H_n^{(j)}-u^{(p_j)}e_1^\top,
\end{equation}
where $ p_j\geq 1 $. Then, by a proof similar to the one in Theorem~\ref{thm:1}, we have that $ H_n^{(j)}(p_j) $ is invertible. Using this, we state the following immediate extension of Theorem~\ref{thm:1}.

\begin{corollaryappendix}\label{corollary:Hilbert-zero}
If the matrix $ H_n(\ell_1,\cdots, \ell_{\gamma}) $ is of the form
\begin{align}\label{eq:inv-form}
H_n&(\ell_1,\cdots, \ell_{\gamma})=\cr
&\begin{bmatrix}
H_n^{(1)}(p_1) & \star & \cdots & \star\\
\mathbf{0}_{r_2\times r_1} & H_n^{(2)}(p_1) &  \cdots & \star\\
\mathbf{0}_{r_3\times r_1}  &  \mathbf{0}_{r3\times r_2}  &  \cdots & \star \\
\vdots & \vdots  & \ddots & \vdots &\\
\mathbf{0}_{r_k\times r_1}  &  \mathbf{0}_{r_k\times r_2}   &  \cdots  & H_n^{(k)}(p_k)  \\
\end{bmatrix},
\end{align}
where each $ H_n^{(j)} $ is defined in~\eqref{eq:Hjp}, $ j \in \{1,\ldots, k\} $, and $ k \in \N_{\geq 1} $, then
$ H_n(\ell_1,\cdots, \ell_{\gamma}) $ is invertible.
\end{corollaryappendix}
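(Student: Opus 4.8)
The plan is to exploit the block upper-triangular structure displayed in~\eqref{eq:inv-form} and reduce the invertibility of $H_n(\ell_1,\cdots,\ell_\gamma)$ to that of its diagonal blocks. For any block upper-triangular matrix the determinant factors as the product of the determinants of the diagonal blocks, independently of the entries occupying the starred upper blocks. Hence it suffices to show that each $H_n^{(j)}(p_j)$ is invertible for $j=1,\ldots,k$, and the off-diagonal (upper) blocks are irrelevant to the argument.

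Concretely, writing $M:=H_n(\ell_1,\cdots,\ell_\gamma)$ for the matrix in~\eqref{eq:inv-form}, the first step is to record the factorization $\det(M)=\prod_{j=1}^{k}\det\!\big(H_n^{(j)}(p_j)\big)$. This is the standard fact that the determinant of a block triangular matrix equals the product of the determinants of its diagonal blocks, provable by induction on $k$ via a Schur-complement (or cofactor) expansion that uses that all blocks below the diagonal are zero; equivalently, one may argue injectivity directly, since $Mx=0$ together with the vanishing lower blocks forces, by back-substitution from the last block upward, each block of $x$ to lie in the kernel of a diagonal block and hence to vanish. The second step is the invertibility of each diagonal block $H_n^{(j)}(p_j)$. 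Each $H_n^{(j)}$ is a principal submatrix of $H_n$, hence itself a Cauchy matrix, and the modification $-u^{(p_j)}e_1^\top$ merely zeros out the first column below row $p_j$, exactly as in the definition of $H_n(\ell)$ in Theorem~\ref{thm:1}. I would therefore mimic that proof: apply the matrix determinant lemma to obtain $\det\!\big(H_n^{(j)}(p_j)\big)=\big(1-e_1^\top (H_n^{(j)})^{-1}u^{(p_j)}\big)\det\!\big(H_n^{(j)}\big)$ and verify that the scalar factor is nonzero by checking that the associated alternating binomial sum never equals $1$. In the case where $H_n^{(j)}$ is a \emph{leading} principal submatrix it coincides with the smaller Hilbert matrix $H_{r_j}$, so this claim is literally Theorem~\ref{thm:1} with $n\mapsto r_j$ and $\ell\mapsto p_j$, and no new computation is required.

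The main obstacle lies entirely in the second step for a \emph{general} (non-leading) principal submatrix, where $(H_n^{(j)})^{-1}$ is the inverse of a Cauchy matrix rather than of a Hilbert matrix; reproducing the analog of the $Z(\ell,n)$ induction from Theorem~\ref{thm:1} would then require the explicit Cauchy-inverse formula and a matching sign and monotonicity analysis. Since the excerpt already grants the invertibility of each $H_n^{(j)}(p_j)$ as a preliminary ``by a proof similar to the one in Theorem~\ref{thm:1},'' the corollary itself collapses to the routine determinant factorization of the first step, and the substantive work is confined to that preliminary.
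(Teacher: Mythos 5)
Your proposal is correct and takes essentially the same route as the paper: the corollary is treated as immediate from the block upper-triangular determinant factorization $\det\big(H_n(\ell_1,\cdots,\ell_\gamma)\big)=\prod_{j=1}^{k}\det\big(H_n^{(j)}(p_j)\big)$, with the invertibility of each diagonal block $H_n^{(j)}(p_j)$ granted beforehand exactly as the paper does (``by a proof similar to the one in Theorem~\ref{thm:1}''). Your closing observation---that the substantive work is confined to that preliminary claim, and is only literally covered by Theorem~\ref{thm:1} when $H_n^{(j)}$ is a leading principal submatrix, the general (shifted, Cauchy) case being left sketched---accurately identifies the same gap the paper itself leaves implicit.
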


\end{document}